\def\inprob{\stackrel{p}{\rightarrow}}
\def\ind{\perp\!\!\!\perp}
\def\T{{ \mathrm{\scriptscriptstyle T} }}
\newcommand{\var}{\text{var}}
\newcommand{\cov}{\text{cov}}
\newcommand{\Pb}{\mathbb{P}}
\newcommand{\Pn}{\mathbb{P}_n}
\newcommand{\E}{\mathbb{E}}
\newcommand{\R}{\mathbb{R}}
\def\logit{\text{logit}}
\def\expit{\text{expit}}
\DeclareMathOperator*{\argmin}{arg\,min}
\DeclareSymbolFont{bbold}{U}{bbold}{m}{n}
\DeclareSymbolFontAlphabet{\mathbbold}{bbold}
\newcommand{\one}{\mathbbold{1}}
\newtheorem{theorem}{Theorem}
\newtheorem{lemma}{Lemma}
\newtheorem{corollary}{Corollary}
\newtheorem{proposition}{Proposition}
\newtheorem{algorithm}{Algorithm}
\theoremstyle{definition}
\newtheorem{definition}{Definition}
\theoremstyle{remark}
\newtheorem{remark}{Remark}
\newtheorem{condition}{Condition}
\begin{document}

\def\spacingset#1{\renewcommand{\baselinestretch}%
{#1}\small\normalsize} \spacingset{1}

\raggedbottom
\allowdisplaybreaks


  \title{\vspace*{-.4in} {Towards Optimal Doubly Robust Estimation \\ of Heterogeneous Causal Effects }}
  \author{\\ Edward H. Kennedy \\ \\
    Department of Statistics \& Data Science \\
    Carnegie Mellon University \\ \\ 
    \texttt{edward@stat.cmu.edu} \\
\date{}
    }

  \maketitle
  \thispagestyle{empty}

\begin{abstract}
Heterogeneous effect estimation is crucial in causal inference, with applications across medicine and social science. Many methods for estimating conditional average treatment effects (CATEs) have been proposed, but there are gaps in understanding if and when such methods are optimal. This is especially true when the CATE has nontrivial structure (e.g., smoothness or sparsity). Our work contributes in several ways. First, we study a two-stage doubly robust CATE estimator and give a generic error bound, which yields rates faster than much of the literature. We apply the bound to derive error rates in smooth nonparametric models, and give sufficient conditions for oracle efficiency. Along the way we give a general error bound for regression with estimated outcomes; this is the second main contribution. The third contribution is aimed at understanding the fundamental statistical limits of CATE estimation. To that end, we propose and study a local polynomial adaptation of double-residual regression. We show that this estimator can be oracle efficient under even weaker conditions, and we conjecture that they are minimal in a minimax sense. We go on to give error bounds in the non-trivial regime where oracle rates cannot be achieved. Some finite-sample properties are explored with simulations.
\end{abstract}

\noindent
{\it Keywords: conditional effects, influence function, minimax rate, nonparametric regression.} 

\section{Introduction}

Heterogeneous effect estimation plays a crucial role in causal inference, with applications across medicine and social science, e.g., improving understanding of variation, and informing policy or optimizing treatment decisions. The most common target parameter in this setup is the conditional average treatment effect (CATE) function, $\E(Y^1-Y^0 \mid X=x)$, which measures the expected difference in outcomes had those with covariates $X=x$ been treated versus not. The CATE is typically identified under standard causal assumptions (including no unmeasured confounding) as the difference between two regression functions, $\tau(x) \equiv \E(Y \mid X=x,A=1)-\E(Y \mid X=x,A=0)$. \\

 Important early methods for estimating the CATE often employed semiparametric models, for example partially linear models  assuming $\tau(x)$ to be constant \citep{robinson1988root, robins1992estimating}, or structural nested models in which $\tau(x)$ followed some known parametric form \citep{robins1994correcting, van2003unified, van2006statistical, vansteelandt2014structural}. These approaches reflect a commonly held conviction that the CATE  may be more structured and simple than the rest of the data-generating process (with the zero treatment effect case an obvious example). This is similar in spirit to the common belief that interaction terms are more often zero than ``main effects''. \\

Recent years have seen a move towards more flexible estimators of $\tau(x)$. The first formal nonparametric model where the CATE has its own complexity separate from regression functions seems to be Example 4 of \citet{robins2008higher}. \citet{van2006statistical} (Section 4.2) proposed an important model-free ``meta-algorithm'' for estimating the CATE, a variant of which we study in this paper. The last 5--10 years has seen even more emphasis on nonparametrics and incorporating machine learning \citep{foster2011subgroup, imai2013estimating, athey2016recursive, shalit2017estimating, semenova2017estimation, nie2017quasi, wager2018estimation, kunzel2019metalearners,  foster2019orthogonal, hahn2020bayesian}. The present work is in a similar vein, focusing on (i) providing more flexible CATE estimators with stronger theoretical guarantees, and (ii) pushing forward our understanding of optimality and the fundamental limits of CATE estimation. At the start of each of the Sections \ref{sec:genoracle}, \ref{sec:drlearner}, and \ref{sec:lprlearner}, we detail related work and describe how our results fit. \\

After describing the setup and presenting a simple motivating illustration in Section \ref{sec:setup}, we go on to present our three main contributions: (i) a model-free oracle inequality for regression with estimated pseudo-outcomes (given in Section \ref{sec:genoracle}); (ii) general conditions for  oracle efficiency of a doubly robust estimator we term the DR-Learner, with applications to specific regression methods under smoothness  conditions (in Section \ref{sec:drlearner});  and (iii) a more refined analysis of a specialized estimator we call the lp-R-Learner, showing that faster rates can be achieved in the non-oracle regime, and giving a partial answer towards understanding the fundamental limits of CATE estimation (in Section \ref{sec:lprlearner}). \\

\section{Setup \& Illustration} \label{sec:setup}

We assume access to an iid sample of observations of $Z_i=(X_i,A_i,Y_i)$, where $X \in \R^d$ are covariates, $A \in \{0,1\}$ is a binary treatment or exposure, and $Y \in \R$ an outcome of interest. The distribution of $Z$ is indexed by the covariate distribution and  nuisance functions:
\begin{align*}
\pi(x) &= \Pb(A =1 \mid X=x) \\
\mu_a(x) &= \E(Y \mid X=x, A=a) \\
\eta(x) &= \E(Y \mid X=x) 
\end{align*}
Our goal is estimation of the difference in regression functions under treatment versus control  $\tau(x) \equiv \mu_1(x) - \mu_0(x)$. 
Under standard causal assumptions of no unmeasured confounding, consistency, and positivity or overlap ($\epsilon \leq \pi \leq 1-\epsilon$ wp1, which we assume throughout),  the function $\tau(x)$ also equals  $\E(Y^1 - Y^0 \mid X=x)$, where $Y^a$ is the counterfactual outcome under $A=a$. We refer to $\mu_1(x)-\mu_0(x)$ as the CATE,  noting that our results hold regardless of whether the causal assumptions do. The average treatment effect (ATE) is given by $\E\{\tau(X)\}$. 

\subsection{Notation}

We use $\Pn(f)=\Pn\{f(Z)\} = \frac{1}{n} \sum_i f(Z_i)$ as shorthand for sample averages. When $x \in \R^d$ we let $\| x \|^2=\sum_j x_j^2$ denote the usual (squared) Euclidean norm, and for generic (possibly random) functions $f$ we let $\| f \|^2= \int f(z)^2 \ d\Pb(z)$ denote the (squared) $L_2(\Pb)$ norm. 
We use the notation $a \lesssim b$ to mean $a \leq Cb$ for some universal constant $C$, and $a \asymp b$ to mean $cb \leq a  \leq Cb$ so that $a \lesssim b$ and $b \lesssim a$. We let $a_n \sim b_n$ mean $a_n/b_n \rightarrow 1$ as $n \rightarrow \infty$. \\

At various points we refer to $s$-smooth functions, which we define as those contained in the H\"{o}lder class $\mathcal{H}(s)$,  a canonical function class in nonparametric regression, density estimation, and functional estimation.  Intuitively, it contains smooth functions that are close to their $\lfloor s \rfloor$-order Taylor approximations. More precisely, $\mathcal{H}(s)$ is the set of functions $f:\mathcal{X} \rightarrow \R$ that are $\lfloor s \rfloor$-times continuously differentiable with partial derivatives bounded, and for which
$$ | D^m f(x) - D^m f(x')| \lesssim \| x - x' \|^{s-\lfloor s \rfloor}  $$
for all $x,x'$ and $m=(m_1,...,m_d)$ such that $\sum_j m_j=\lfloor s \rfloor$, where $D^m=\frac{\partial^{\lfloor s\rfloor}}{\partial_{x_1}^{m_1}...\partial_{x_d}^{m_d}}$ is the multivariate partial derivative operator. \\

\subsection{Simple Motivating Illustration} \label{sec:motiv}

Consider a simple data-generating process where the covariates $X$ are uniform on $[-1,1]$, 
\begin{align*}
\pi(x) &= 0.5 + 0.4 \ \text{sign}(x) 
\end{align*}
and $\mu_1(x)=\mu_0(x)$ are equal to the piecewise polynomial function defined on page 10 of \citet{gyorfi2002distribution}, given by
$$ \mu_a(x) = \begin{cases}
(x+2)^2/2 & \text{ if } -1 \leq x \leq -0.5, \\
x/2 + 0.875 & \text{ if } -0.5 \leq x < 0, \\
-5(x-0.2)^2  +1.075 & \text{ if } 0 < x \leq 0.5, \\
x + 0.125 &  \text{ if } 0.5 \leq x < 1,
\end{cases} , $$ 
which is illustrated in Figure \ref{fig:simex1}. Figure \ref{fig:simex1} also shows $n=1000$ simulated data points from this data-generating process, approximately half of which are treated (shown on the left panel) and the other half untreated (shown on the right). Also shown are estimates of the corresponding $\mu_1$ and $\mu_0$ functions, using default tuning parameters with the \verb|smoothing.spline| function in base R. \\

An interesting but likely common phenomenon occurs in this simple example. The individual regression functions are non-smooth, and difficult to estimate well on their own; this is especially true in the region where there are fewer treated indviduals. Thus the estimate $\widehat\mu_1$ tends to oversmooth on the left, where there are more untreated individuals; in contrast, the estimate $\widehat\mu_0$ tends to \emph{undersmooth} on the right, where there are more treated individuals. This means a naive plug-in estimator of the CATE that simply takes the difference $\widehat\mu_1-\widehat\mu_0$ will be a poor and overly complex estimator of the true difference, which is not only a constant but zero.  \\

\begin{figure}[h!]
\begin{center}
{\includegraphics[width=\textwidth]{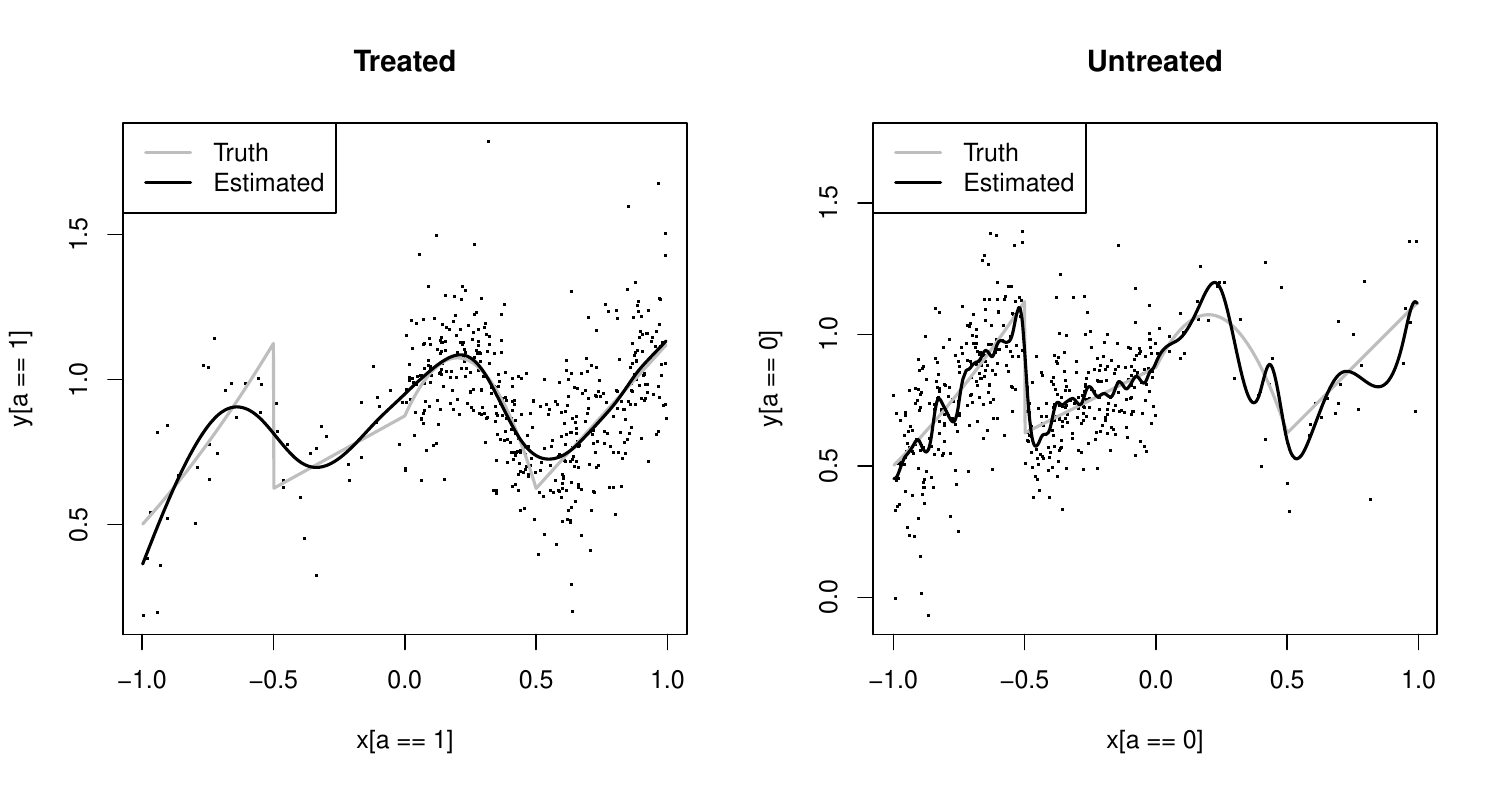}}
\caption{Plot of simulated data where the regression functions $\mu_1$ and $\mu_0$ individually are complex and difficult to estimate, but their difference is simply constant and equal to zero. Thus a naive plug-in estimator of the CATE will be overly complex, yielding large errors. } \label{fig:simex1}
\end{center}
\end{figure}

In contrast, suppose for simplicity that the propensity scores $\pi$ were known. Then a regression of the inverse-probability-weighted (IPW) pseudo-outcome $\frac{(A-\pi)Y}{\pi(1-\pi)}$ would, up to constants, behave just as an oracle estimator that had access to the actual counterfactual difference $Y^1-Y^0$, since the conditional mean of this pseudo-outcome is exactly $\tau(x)$. Figure \ref{fig:simex2} shows results from this procedure, as well as two other more efficient and doubly robust versions described in subsequent sections, again all using default tuning parameter choices from \verb|smoothing.spline|.  For these simulated data, the doubly robust estimators are much more efficient than the IPW estimator, and do a much better job of adapting to the correct  underlying simplicity of the true $\tau$. \\

\begin{figure}[h!]
\begin{center}
{\includegraphics[width=.65\textwidth]{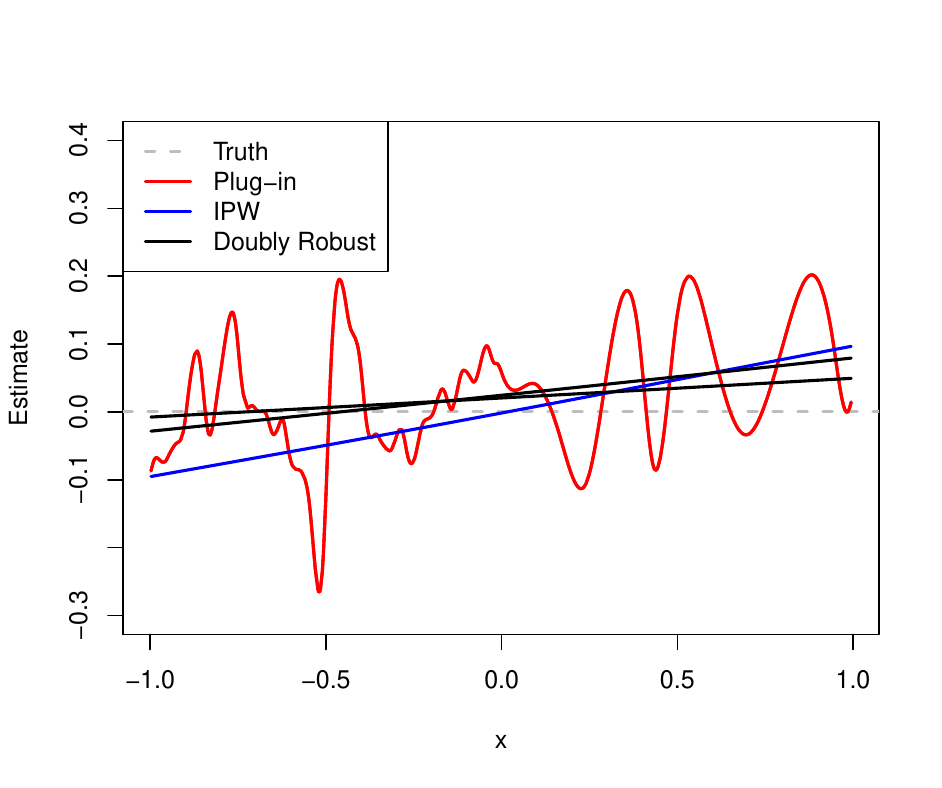}}
\caption{Estimated CATE curves in a simple simulated example. The plug-in method inherits the large errors from estimating the individual regression functions, which are complex and non-smooth. The IPW and doubly robust methods adapt to the smoothness of the CATE, which is constant in this example, with the doubly robust methods more efficient.} \label{fig:simex2}
\end{center}
\end{figure}

The results shown in Figure \ref{fig:simex2} are typical for this data-generating process: across 500 simulations, the IPW and doubly robust estimators gave smaller integrated squared bias across $X$ by a factor of 10--100, respectively, and the doubly robust estimators improved on the integrated variance of the IPW estimator by nearly a factor of 20. \\

\begin{remark} 
If the CATE has no additional structure (e.g., smoothness, sparsity) beyond that of the individual regression functions $\mu_a$, then a simple plug-in estimator $\widehat\tau(x)=\widehat\mu_1(x)-\widehat\mu_0(x)$ would be sufficient. However, in general the complexities of $\mu_a$ and $\tau$ may differ substantially, and in practice we often expect the CATE to be much more structured, as described in the Introduction.   \\ 
\end{remark}

In the following sections we study the error of procedures like those illustrated above, giving model-free guarantees for practical use, as well as a more theoretical study of the fundamental limits of CATE estimation in a large nonparametric model.   \\

\section{General Pseudo-Outcome Regression} \label{sec:genoracle}

In this section we lay out a framework for characterizing the error of general  two-stage regression estimators that regress estimated ``pseudo-outcomes'' on a covariate vector. We define a new notion of stability (which we prove holds for generic linear smoothers, for example), and then show how this stability can be combined with a small-bias condition to yield oracle efficiency. 
In addition to laying a foundation for the analysis of a doubly robust CATE estimator in the next section, the result should also be of independent interest in other problems  involving regression with estimated or imputed outcomes \citep{fan1994censored, ai2003efficient, rubin2005general, rubin2006doubly, kennedy2017nonparametric, bibaut2017data}. \\

With a few exceptions, previous work on regression with estimated outcomes has not appeared to exploit sample splitting, and has largely focused on particular pseudo-outcomes, and particular regression estimators (in both stages); in contrast we lean on sample splitting in order to obtain a more general result that is agnostic about the methods used, beyond a basic stability condition. Prominent examples of previous work include \citet{ai2003efficient}, \citet{rubin2005general}, and \citet{foster2019orthogonal}, all of which gave results for pseudo-outcomes of a general form. \citet{ai2003efficient} and \citet{rubin2005general} did not use sample splitting and so limited their attention to particular estimators. \citet{ai2003efficient} used sieves, and focused more on a finite-dimensional component appearing in the pseudo-outcome. \citet{rubin2005general} considered least squares, penalized methods, and linear smoothers in the second stage, while being agnostic about the first-stage regression. However, their error bounds do not allow one to exploit double robustness in the pseudo-outcome, which is a crucial advantage in practice and which our result does allow.  \\

Our results are closest in spirit to \citet{foster2019orthogonal}, who study generic empirical risk minimization when the loss involves complex nuisance functions. However there are some important distinctions to be made. Most importantly, \citet{foster2019orthogonal} assume the loss satisfies a Neyman orthogonality property in order to obtain squared error rates; in contrast, our bound does not rely on this structure, but can exploit it when it holds, as   shown in the following section. Crucially, our bound will also be seen to yield doubly robust errors, whereas the orthogonality-based results of \citet{foster2019orthogonal} yield errors that are second-order but not doubly robust. Double robustness is essential when different nuisance components are estimated with different errors. Lastly,  \citet{foster2019orthogonal} pursue global error bounds, while we consider pointwise error, which can be beneficial for developing inferential tools. These and some other differences are discussed further in the next section. \\

In what follows we first define a new notion of estimator stability, which can be viewed as a form of stochastic equicontinuity for  nonparametric regression (as opposed to averaging).  \\

\begin{definition}[Stability] \label{def:stability}
Suppose $D^n=(Z_{01},...,Z_{0n})$ and $Z^n=(Z_1,...,Z_n)$ are independent training and test samples, respectively, with covariate $X_i \subset Z_i$. 
Let:
\begin{enumerate}
\item  $\widehat{f}(z)=\widehat{f}(z;D^n)$ be an estimate of a function $f(z)$ using the training data $D^n$,
\item $\widehat{b}(x) = \widehat{b}(x;D^n)  \equiv \E\{ \widehat{f}(Z) - f(Z) \mid D^n, X=x\}$ the conditional bias of the estimator $\widehat{f}$, 
\item $\widehat\E_n(Y \mid X=x)$ denote a generic regression estimator that regresses outcomes $(Y_1,...,Y_n)$ on covariates $(X_1,...,X_n)$ in the test sample $Z^n$. 
\end{enumerate}
Then the regression estimator $\widehat\E_n$ is defined as \emph{stable} at $X=x$ (with respect to  a distance metric $d$) if
\begin{equation} \label{eq:stability}
\frac{\widehat\E_n\{ \widehat{f}(Z) \mid X=x\} - \widehat\E_n\{ {f}(Z) \mid X=x\} - \widehat\E_n\{ \widehat{b}(X) \mid X=x \}}{\sqrt{\E\left(\Big[ \widehat\E_n\{ f(Z) \mid X=x\} - \E\{f(Z) \mid X=x\} \Big]^2  \right)}} \ \inprob \ 0
\end{equation}
whenever  
$ d(\widehat{f},f) \inprob 0 $.
\end{definition}

\bigskip

Sample splitting plays an important role here:  the regression procedure as defined estimates the pseudo-outcome on a separate sample, independent from the one used in the second-stage regression via $\widehat\E_n$. Examples of such procedures can be found in subsequent sections, e.g., as illustrated in Figures \ref{fig:drlearner} and \ref{fig:rlearner}. The main role of sample splitting  is that it allows for informative error analysis while being agnostic about the first- and second-stage methods.  \\

\begin{remark}
With iid data, one can always obtain separate independent samples by randomly splitting the data in half (or in folds); further, to regain full sample size efficiency one can always swap the samples, repeat the procedure, and average the results, popularly called cross-fitting and used for example by \citet{bickel1988estimating}, \citet{robins2008higher}, \citet{zheng2010asymptotic}, and \citet{chernozhukov2018double}. In this paper, to simplify notation we always analyze a single split procedure, with the understanding that extending to an analysis of an average across independent splits is straightforward. \\
\end{remark}

As mentioned above, Definition \ref{def:stability} can be viewed as a generalization of the classic condition that 
$$ \frac{ (\Pn-\Pb)(\widehat{f}-f )}{1/\sqrt{n}} \inprob 0  $$
when $\| \widehat{f}-f\| \inprob 0$ (cf.\ \citet{andrews1994asymptotics}, Lemma 19.24 of \citet{van2000asymptotic}, Lemma 2 of \citet{zheng2010asymptotic}, term $\mathcal{I}_{3,k}$ of \citet{chernozhukov2018double}, Lemma 2 of \citet{kennedy2020sharp}, etc.). More specifically, Definition \ref{def:stability} generalizes  from settings involving averages $\Pn$ to generic regressions $\widehat\E_n$, where slower than root-n rates can appear in the denominator scaling. We also note that the stability in Definition \ref{def:stability} is local or pointwise, in the sense that it is defined at a particular $X=x$ (because pointwise rates are the focus of this paper). More global variants of stability are also possible; for example, after this work appeared on arxiv, \citet{rambachan2022counterfactual} used an $L_2$ version of stability (their Assumption B.1 and Proposition B.1) to study $L_2$ rates (their Lemma B.1). \\

The next result shows that generic linear smoothers are stable, with respect to a natural weighted $L_2$ distance. This and all other proofs are given  in Section \ref{sec:proofs}. Linear smoothers are a fundamental class of regression estimators and include, for example, linear regression, series methods, local polynomials, nearest neighbor matching, smoothing splines, kernel ridge regression, and some versions of random forests \citep{biau2012analysis, scornet2016random, verdinelli2021forest}. We suspect other kinds of estimators that are not linear smoothers are also stable in the sense of Definition \ref{def:stability}, but leave this to future work. \\

\begin{theorem} \label{thm:linearsmoothers}
Linear smoothers of the form $\widehat\E_n\{\widehat{f}(Z) \mid X=x\}=\sum_i w_i(x;X^n) \widehat{f}(Z_i)$ are stable in the sense of Definition \ref{def:stability}, with respect to distance 
$$ d(\widehat{f},f)  = \| \widehat{f}-f\|_{w^2} \equiv \sum_{i=1}^n \left\{ \frac{ w_i(x;X^n)^2}{\sum_j w_j(x;X^n)^2} \right\} \int \left\{ \widehat{f}(z) - f(z) \right\}^2 \ d\Pb(z \mid X_i) , $$
whenever $1/\| \sigma \|_{w^2} = O_\Pb(1)$ for $\sigma(x)^2 = \var\{f(Z) \mid X=x\}$. \\
\end{theorem}

We note that Theorem \ref{thm:linearsmoothers}  recovers results similar to Lemma 2 of \citet{kennedy2020sharp} for averages, i.e., when the weights all equal $w_i(x;X^n)=n^{-1}$, but also covers the much larger class of all linear smoothers. The norm $\| \cdot \|_{w^2}$ defining the relevant distance $d(\widehat{f},f)$ for linear smoothers is a natural weighted and conditional version of an $L_2(\Pb)$ norm. Namely, it is a weighted average of the conditional $L_2(\Pb)$ norm (given $X$), weighted more towards the point $X=x$ depending on how localized the weights $w_i(x;X^n)$ are. \\

The next result shows how stability and consistency of $\widehat{f}$ yield a rate of convergence result, relative to an oracle estimator that regresses the true unknown $f(Z)$ on $X$, and that a further small-bias condition yields oracle efficiency (i.e., asymptotic equivalence to the oracle estimator). This lays out a recipe for deriving convergence rates and conditions for oracle efficiency for general pseudo-outcome regression problems, which we will use in the CATE setup in the next section. \\

\begin{proposition} \label{prop:oracle}
Under the same setup from Definition \ref{def:stability}, also define:
\begin{enumerate}
\item[i.] $m(x) = \E\{f(Z) \mid X=x\}$ the conditional expectation of $f(Z)$ given $X$, 
\item[ii.] $\widehat{m}(x) = \widehat\E_n\{ \widehat{f}(Z) \mid X=x\}$ the regression of $\widehat{f}(Z)$ on $X$ in the test samples,  
\item[iii.] $\widetilde{m}(x) = \widehat\E_n\{ f(Z) \mid X=x\}$ the corresponding oracle regression of $f(Z)$ on $X$,
\end{enumerate}
and the oracle mean squared error  $R_n^*(x)^2 = \E[\{ \widetilde{m}(x)-m(x)\}^2 ]$. 
If: 
\begin{enumerate}
\item the regression estimator $\widehat\E_n$ is stable with respect to distance $d$, and
\item $d(\widehat{f},f) \inprob 0$, 
\end{enumerate}
then 
$$\widehat{m}(x) - \widetilde{m}(x) = \widehat\E_n\{ \widehat{b}(X) \mid X=x\} + o_\Pb(R_n^*(x)). $$
Therefore if it further holds that
\begin{enumerate}
\item[3.] $\widehat\E_n\{ \widehat{b}(X) \mid X=x\}=o_\Pb(R_n^*(x))$,  
\end{enumerate}
then $\widehat{m}$ is oracle efficient, i.e., asymptotically equivalent to the oracle estimator $\widetilde{m}$ in the sense that 
$$ \frac{ \widehat{m}(x) - \widetilde{m}(x) }{R_n^*(x)} \ \inprob \ 0 . $$
\end{proposition}

\bigskip 

We now go on to discuss each component of Proposition \ref{prop:oracle}. The oracle risk $R_n^*(x)$ will typically be known (at least up to constants) based on structural assumptions on the target regression function $m(x)$ as well as the form of the estimator $\widehat\E_n$. For example, if $m$ is $s$-smooth  and $\widehat\E_n$ is an appropriate minimax optimal estimator (e.g., based on series or local polynomial regression) then $R_n^*(x) \asymp n^{-1/(2+d/s)}$. Similarly, if $m$ is $s$-sparse and $\widehat\E_n$ is an appropriate minimax optimal estimator (e.g., lasso) then $R_n^*(x) \asymp \sqrt{s \log d/n}$, under some assumptions. The stability condition was described previously; as mentioned there, it holds for linear smoothers and we expect it to hold more generally. The consistency condition $d(\widehat{f},f) \inprob 0$ is mild since it does not impose any particular rate requirement, and would generally follow from standard regression results (depending on the form of $\widehat{f}$); although consistency at any rate is sufficient for asymptotic negligibility, finite-sample behavior could depend on the rate at which $d(\widehat{f},f)$ converges to zero. \\

Under just stability of $\widehat\E_n$ and consistency of $\widehat{f}$, one obtains the rate of convergence result  
$$\widehat{m}(x) - m(x) = \widetilde{m}(x) - m(x) +  \widehat\E_n\{ \widehat{b}(X) \mid X=x\} + o_\Pb(R_n^*(x)) . $$
The bias term $\widehat\E_n\{ \widehat{b}(X) \mid X=x\}$ (recalling $\widehat{b}(x) = \E\{ \widehat{f}(Z) - f(Z) \mid D^n, X=x\}$) is the crucial piece that will generally determine whether the estimator $\widehat{m}$ is oracle efficient; if not, the rate would follow from this bias term. The bias  $\widehat{b}(x)$ can in be determined on a case-by-case basis, depending on the estimator $\widehat{f}$. For simple plug-in estimators of $f$, the bias would typically have a first-order dependence on the estimation error in whatever nuisance functions appear in $f$. However, in some cases one will be able to construct influence-function-based estimators of $f$ that have only a second-order dependence on nuisance estimation error. The availability of such estimators will depend on the form of $m$, but would be expected to exist when $m$  is a structured combination of regressions or densities (e.g., a dose-response curve, CATE, counterfactual density, etc.). We refer to Section 5.3 of \citet{kennedy2022semiparametric} for some related general discussion of using influence functions for non-pathwise differentiable quantities like $m$. \\

Note $\widehat\E_n\{ \widehat{b}(X) \mid X=x\}$ is actually an estimate of the regression of the bias term $\widehat{b}(X)$ at $X=x$; thus  it can be viewed as a smoothed bias, which may be more or less averaged or localized depending on the form of $\widehat\E_n$. Since the form of $\widehat{b}(x)$ will often be well-understood (e.g., derived analytically based on the form of $\widehat{f}$), in the following proposition we relate it to its smoothed analog $\widehat\E_n\{ \widehat{b}(X) \mid X=x\}$ in the linear smoother case. \\

\begin{proposition} \label{prop:smoothbias}
If 
 $\widehat{b}(x) = \widehat{b}_1(x) \widehat{b}_2(x)$, and 
 $\widehat\E_n$ is a linear smoother with $\sum_i |w_i(x;X^n)| = O_\Pb(c_n)$, 
then
$$ \widehat\E_n\{ \widehat{b}(X) \mid X=x\} = O_\Pb\left(  c_n \| \widehat{b}_1 \|_{w,p} \| \widehat{b}_2 \|_{w,q} \right)  $$
for the norm $\| f \|_{w,p}= \left[ \sum_i \left\{ \frac{|w_i(x;X^n)|}{\sum_j |w_j(x;X^n)|} \right\} | f(X_i)|^p \right]^{1/p}$ and $1/p + 1/q=1$ ($p,q>1$). 
\end{proposition}

\bigskip

Proposition \ref{prop:smoothbias} shows that the smoothed bias can be expressed  in terms of natural weighted norms of the components of the bias $\widehat{b}(x)$ itself. For many linear smoothers $\sum_i |w_i(x;X^n)| \leq C$ with probability one, so that $c_n=1$ (e.g., this is a condition in the celebrated theorem of \citet{stone1977consistent} guaranteeing weak universal consistency of linear smoothers). For series estimators with $w_i(x;X^n) = \frac{1}{n} \rho(x)^\T \widehat{Q}_{hx}^{-1} \rho(X_i)$ where $\rho$ is some basis of dimension $k$ and $\widehat{Q}_{hx}=\Pn(\rho\rho^\T)$, a simple bound can yield $c_n=\sup_x \sqrt{\rho(x)^\T \rho(x)}$ (which is of order $\sqrt{k}$ for many series). We suspect the dependence on $k$ can be avoided with more careful analysis, or when working with $L_2(\Pb)$ norms. \\

\section{DR-Learner} \label{sec:drlearner}

In this section we analyze a two-stage doubly robust estimator we refer to as the DR-Learner, following the naming scheme from \citet{nie2017quasi} and \citet{kunzel2019metalearners}. After detailing previous work, we describe the algorithm in detail, and then give model-agnostic error bounds which apply for arbitrary first-stage estimators, and as long as the second-stage estimator is stable in the sense of Definition \ref{def:stability}. We go on to apply the error bound in multiple nonparametric models incorporating smoothness or sparsity structure, and then explore the performance of the DR-Learner in simulations. \\

\subsection{Previous Work}

Variants of the DR-Learner have been used before, though often tied to particular estimators and not incorporating sample splitting, which can allow for model-agnostic error bounds and reduced bias. \citet{van2006statistical} (Section 4.2) appears to be the first to propose the general DR-Learner approach, i.e., flexible regressions of the pseudo-outcome \eqref{eq:pseudo} below on covariates. Specifically, \citet{van2006statistical} (along with \citet{van2013targeted} and \citet{luedtke2016super}) advocates regressing the pseudo-outcome on $V \subseteq X$ to construct candidate CATE estimators, and then selecting among them with a tailored cross-validation approach \citep{van2003cross}. The main distinction with our work is they did not give specific error guarantees for the CATE.  \\

To the best of our knowledge, previous papers  that do give specific error rates for the DR-Learner either employ stronger nuisance estimation conditions than we show are required, or else do not allow the CATE to be smoother or more structured than the individual regression functions $\mu_a$.  \citet{lee2017doubly} studied a local-linear version of the DR-Learner, but assumed the first-stage nuisance error was negligible. \citet{semenova2017estimation} and \citet{zimmert2019nonparametric} studied series and local-constant variants, respectively, but required conditions on nuisance estimation that are as restrictive as for the ATE. \citet{fan2019estimation} also studied a local-constant variant, but did not consider the case where the CATE is smoother than the regression functions. \\

Our results are closest to \citet{foster2019orthogonal}, who also considered the DR-Learner and gave an oracle inequality for generic empirical risk minimization when the loss involves complex nuisance functions. However there are some important distinctions to be made. First and most importantly, our error bound for the DR-Learner is doubly robust, involving a simple second-order product of nuisance errors. In contrast, the results of \citet{foster2019orthogonal} yield errors that are second-order but not doubly robust, instead involving $L_4$ errors of all nuisance components. This means our error bound is tighter when the propensity score and outcome regressions are estimated at different rates.  Second, we focus on local estimation error at a point, whereas \citet{foster2019orthogonal} consider global error (e.g., integrated MSE); in this sense our work is complementary. A third important distinction is that our results can be used to justify the validity of inferential procedures, such as confidence intervals and hypothesis testing, whereas \citet{foster2019orthogonal} focus on global rates of convergence. \\

There is also a related literature on regression with estimated pseudo-outcomes, similar in spirit to \citet{foster2019orthogonal}, though giving somewhat more specialized results. For example, \citet{ai2003efficient} and \citet{rubin2005general} both studied regression with general pseudo-outcomes, though they focused on particular estimators, and did not give doubly robust error bounds.  \citet{ai2003efficient} restricted their attention to sieves, and focused more on a finite-dimensional component appearing in the pseudo-outcome. \citet{rubin2005general} considered least squares, penalized methods, and linear smoothers in the second stage, but their error bounds are not doubly robust.\\

 \citet{kunzel2019metalearners} considered various method-agnostic ``meta-learners'', but not the DR-Learner; further, none of their methods are doubly robust, and so in general would inherit larger error rates from the underlying regression estimators. \\

\subsection{Construction \& Analysis}

The algorithm below describes our proposed construction of the DR-Learner. \\

\begin{algorithm}[DR-Learner] \label{alg:drlearner}
Let $(D_{1}^n,D_2^n)$ denote two independent samples of $n$ observations of $Z_i=(X_i,A_i,Y_i)$. 
\begin{enumerate}
\item[Step 1.] Nuisance training:
\begin{enumerate}
\item Construct estimates $\widehat\pi$ of the propensity scores $\pi$ using $D_{1}^n$. 
\item Construct estimates $(\widehat\mu_0,\widehat\mu_1)$ of the regression functions $(\mu_0,\mu_1)$ using $D_{1}^n$. 
\end{enumerate}
\item[Step 2.]  Pseudo-outcome regression: Construct the pseudo-outcome 
\begin{equation}
\widehat\varphi(Z) = \frac{A-\widehat\pi(X)}{\widehat\pi(X)\{1-\widehat\pi(X)\}} \Big\{ Y - \widehat\mu_A(X) \Big\} + \widehat\mu_1(X) - \widehat\mu_0(X) \label{eq:pseudo}
\end{equation}
and regress it on covariates $X$ in the test sample $D_2^n$, yielding
\begin{equation}
\widehat\tau_{dr}(x) = \widehat\E_n\{ \widehat\varphi(Z) \mid X=x \} .
\end{equation}
\item[Step 3.]  Cross-fitting (optional): Repeat Step 1--2,  swapping the roles of $D_1^n$ and $D_2^n$ so that $D_{2}^n$ is used for nuisance training and $D_{1}^n$ as the test sample. Use the average of the resulting two estimators as a final estimate of $\tau$. $K$-fold variants are also possible. 
\end{enumerate}
\end{algorithm}

\bigskip

 Figure \ref{fig:drlearner} gives a schematic illustrating the DR-Learner construction.  \\

\begin{figure}[h!]
\begin{center}
\fbox{\includegraphics[width=.8\textwidth]{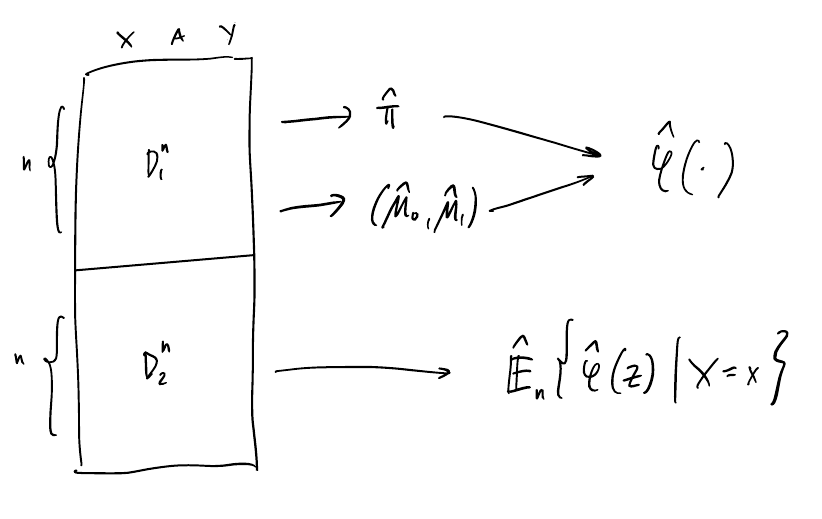}}
\caption{Schematic illustrating the DR-Learner approach. In the first stage, the nuisance functions $\widehat\pi$ and $(\widehat\mu_0,\widehat\mu_1)$ are estimated from training sample $D_{1}^n$. In the second stage, these estimates are used to construct an estimate of the pseudo-outcome $\widehat\varphi$, which is then regressed on $X$ using the  test sample $D_{2}^n$.} \label{fig:drlearner}
\end{center}
\end{figure}

\begin{remark}
The DR-Learner approach is motivated by the fact that \eqref{eq:pseudo} is the (uncentered) efficient influence function for the ATE \citep{robins1995semiparametric, hahn1998role}; this drives many of its favorable properties.  The intuition is that, to efficiently estimate the ATE, the standard doubly robust estimator  \emph{averages} the pseudo-outcome $\widehat\varphi$,  so to estimate the CATE the DR-Learner \emph{regresses} $\widehat\varphi$ on covariates. For a review of influence functions and semiparametric theory we refer to \citet{van2003unified}, \citet{tsiatis2006semiparametric}, and \citet{kennedy2022semiparametric}. \\
\end{remark}

Next we present the main result of this section, which gives error bounds  for the DR-Learner procedure (relative to an oracle) for arbitrary first-stage estimators, as long as the second-stage estimator is stable in the sense of Definition \ref{def:stability}. \\

\begin{theorem} \label{thm:drlearner}
Let $\widehat\tau_{dr}(x)$ denote the DR-Learner estimator detailed in Algorithm \ref{alg:drlearner}, which regresses the estimated pseudo-outcome $\widehat\varphi(Z)$ on covariates $X$. Assume:
\begin{enumerate}
\item The second-stage regression estimator $\widehat\E_n$ is stable with respect to distance $d$,
\item $d(\widehat\varphi,\varphi) \inprob 0$, 
\end{enumerate}
Let $\widetilde\tau(x) = \widehat\E_n\{ \varphi(Z) \mid X=x\}$ denote an oracle estimator that regresses the true pseudo-outcome $\varphi(Z)$ on $X$, and denote its risk by $R_n^*(x)^2=\E[ \{ \widetilde\tau(x) - \tau(x) \}^2 ]$. 
Then
$$\widehat\tau_{dr}(x) - \widetilde\tau(x) = \widehat\E_n\{\widehat{b}(X) \mid X=x\} + o_\Pb\Big( {R_n^*(x)} \Big) $$
for
$$ \widehat{b}(x) = \sum_{a=0}^1 \frac{ \{ \widehat\pi(x) - \pi(x) \} \{\widehat\mu_a(x) - \mu_a(x) \} }{a \widehat\pi(x) + (1-a) (1-\widehat\pi(x))}  $$
and $\widehat\tau_{dr}(x)$ is oracle efficient in the sense of Proposition \ref{prop:oracle} if $\widehat\E_n\{\widehat{b}(X) \mid X=x\}  = o_\Pb\Big( {R_n^*(x)} \Big)$. \\
\end{theorem}

The bound on the DR-Learner error given in Theorem \ref{thm:drlearner} shows that it can only deviate from the oracle error by at most a (smoothed) product of errors in the propensity score and regression estimators, thus allowing faster rates for estimating the CATE even when the nuisance estimates converge at slower rates. Importantly the result is agnostic about the methods used, and requires no special tuning or undersmoothing.  \\

Theorem \ref{thm:drlearner} gives a smaller risk bound compared to earlier work. 
\citet{semenova2017estimation} and \citet{zimmert2019nonparametric} gave a remainder error larger than the product of the nuisance mean squared errors, with oracle efficiency requiring this product to shrink faster than $1/nd$ and $1/(n/h)$ for series and kernel-based second stage regressions, respectively (for $h$ a shrinking bandwidth). \citet{fan2019estimation} assumed the CATE was only as smooth as the individual regression functions, giving a larger oracle risk. The results from \citet{foster2019orthogonal} are closest to ours, but as mentioned earlier their error bounds are not doubly robust, and instead involve $L_4$ errors of all nuisance components.  \\

\begin{remark}
Several important oracle inequalities for cross-validated selection of estimators exist in the literature \citep{van2003cross, luedtke2016super, diaz2018targeted}. These are relevant for cross-validated CATE estimation, but are conceptually different from our Theorem \ref{thm:drlearner}; for example, they use a different oracle. Namely, in cross-validated selection the oracle is the best performing among a group of learners, whereas in our setup the oracle is the specified learner $\widehat\E_n$ when it is given access to the true pseudo-outcome $\varphi(Z)$. \\
\end{remark}

\begin{remark}
In addition to showing double robustness and giving conditions for oracle efficiency, Theorem \ref{thm:drlearner} also has important implications for inference. Namely, if the DR-Learner is oracle efficient so that $\widehat\tau(x) - \tilde\tau(x) = o_\Pb(R_n^*(x))$, then whenever an inferential result is available for the oracle estimator $\tilde\tau(x)$ (which is just a standard regression of oracle pseudo-outcomes on covariates), this would also apply to $\widehat\tau(x)$, immediately allowing for the construction of confidence intervals, tests, etc. For example, if one used a local polynomial estimator for the second-stage regression $\widehat\E_n$, then under standard conditions  \citep{masry1996multivariate} $\tilde\tau(x)$ would be asymptotically normal, and so $\widehat\tau(x)$ would be as well, by virtue of the oracle equivalence. Thus standard confidence intervals could be constructed, treating the estimated pseudo-outcomes as one would usual observed outcomes in nonparametric regression. This is a benefit of the  asymptotic equivalence to the oracle in Theorem \ref{thm:drlearner}. In contrast, the results from \citet{foster2019orthogonal}, for example, instead give conditions under which the global risk of a DR-Learner is within a multiplicative constant of that of the oracle, which is not as immediately useful for inference. 
\end{remark}

\subsection{Examples \& Illustrations} \label{sec:drlex}

An important feature of the oracle result in Theorem \ref{thm:drlearner} is that it is essentially model-free: beyond a mild consistency assumption, it only requires that the second-stage regression estimator satisfies the stability condition in Definition \ref{def:stability}. In the following corollaries, we illustrate the flexibility of this result by applying it in settings where the nuisance functions and CATE are smooth or sparse (i.e., in settings where local polynomial, series, lasso, or random forest estimators would work well). Similar results could be obtained in generic models with known bounds on mean squared error rates. \\

\begin{corollary} \label{cor:drlsmooth}
Suppose the assumptions of Theorem \ref{thm:drlearner} hold. Further assume:
\begin{enumerate}
\item  $\widehat\E_n$ is a minimax optimal linear smoother with $\sum_i | w_i(x;X^n) | = O_\Pb(1)$.
\item The propensity score $\pi$ is $\alpha$-smooth, and $\| \widehat\pi - \pi \|_{w,2} = O_\Pb(n^{-1/(2+d/\alpha)})$.
\item The regression functions $\mu_a$ are $\beta$-smooth, and $\| \widehat\mu_a - \mu_a \|_{w,2} = O_\Pb(n^{-1/(2+d/\beta)})$.
\item The CATE $\tau$ is $\gamma$-smooth. 
\end{enumerate}
Then 
$$ \widehat\tau_{dr}(x) - \tau(x) = O_\Pb\left( n^{\frac{-1}{2+d/\gamma}} + n^{-\left(\frac{1}{2+d/\alpha} + \frac{1}{2 + d/\beta}\right)} \right)  $$
and the DR-Learner is oracle efficient if 
\begin{equation}
\sqrt{\alpha\beta} \geq   \frac{ d/2}{\sqrt{1 + \frac{d}{\gamma}\left(1 + \frac{d}{2\overline{s}} \right) }} , \label{eq:dreff1}
\end{equation}
where $\overline{s}=(\frac{\alpha^{-1}+\beta^{-1}}{2})^{-1}$ is the harmonic mean of $(\alpha,\beta)$.
\end{corollary}

\bigskip

Corollary \eqref{cor:drlsmooth} illustrates how the DR-Learner can adapt to smoothness in the CATE even when the propensity score and regression functions may be less smooth, and gives sufficient conditions for achieving the oracle rate  depending on nuisance smoothness and dimension $d$. \\

It is instructive to compare the sufficient condition in \eqref{eq:dreff1} to the analogous condition for root-n consistency of a standard doubly robust estimator of the average treatment effect, which is $\sqrt{\alpha\beta} \geq d/2$ (cf.\ Equation 25 of \citet{robins2009quadratic}). First, as the CATE smoothness gets larger, the sufficient condition \eqref{eq:dreff1} for the CATE approaches that for the ATE, as should be expected (intuitively, an infinitely smooth CATE should be nearly as easy to estimate as the ATE). Second, the term in the denominator  of \eqref{eq:dreff1}, dividing $d/2$, can be interpreted as a ``lowered bar'' for optimal estimation, due to the fact that the oracle rate $n^{\frac{-1}{2+d/\gamma}}$ is slower than root-n. This phenomenon was also noted in the dose-response estimation problem by \citet{kennedy2017nonparametric}, and is in contrast with the error bounds given in \citet{nie2017quasi} and \citet{zimmert2019nonparametric}, which required ATE-like conditions for oracle efficiency of CATE estimators, via $n^{-1/4}$ or faster rates on the nuisance estimators (note that if $\alpha=\beta$ then $\sqrt{\alpha\beta} \geq d/2$ means the nuisance error rate is faster than $n^{-1/4}$). In the next section we will show how the sufficient condition  \eqref{eq:dreff1} can even be improved upon. \\

Similar results could be obtained for other models. For example, suppose the propensity score and regression functions are $\alpha$- and $\beta$-sparse, respectively, and estimated at rates $\alpha \log d/2$ and $\beta \log d/n$, and similarly for the CATE with $\gamma$-sparsity. The corresponding condition for oracle efficiency would be $\alpha\beta \log^2 d / n^2 \leq \gamma \log d/n$, which is $\sqrt{\alpha\beta} \leq \gamma n / \log d$. However we note that we have not proved second-stage estimators attaining these rates satisfy the stability condition of Definition \ref{def:stability}. \\

\subsection{Simulation Experiments}

In this section we study some finite-sample properties  via simulations (R code is available in Section \ref{sec:appendix}). We use four methods for CATE estimation: a plug-in that estimates the regression functions $\mu_0$ and $\mu_1$ and takes the difference (called the T-Learner by \citet{kunzel2019metalearners}), the X-Learner from \citet{kunzel2019metalearners}, the DR-Learner from Section \ref{sec:drlearner}, and an oracle DR-Learner that uses the true pseudo-outcome in the second-stage regression. \\

First we use the piecewise polynomial model from the motivating example in Section \ref{sec:motiv}, with outcome and second-stage regressions  fit using \verb|smoothing.spline| in R. Figure \ref{fig:motiv} shows the mean squared error for the four CATE methods at $n=2000$ (based on 500 simulations with MSE averaged over 500 independent test samples), across a range of convergence rates for the propensity score estimator $\widehat\pi$. To control the convergence rate we constructed this estimator as $\widehat\pi=\expit\{\logit (\pi) + \epsilon_n\}$, where $\epsilon_n \sim N(n^{-\alpha},n^{-2\alpha})$ so that $\text{RMSE}(\widehat\pi) \sim n^{-\alpha}$ (the  estimator was truncated to lie within $[0.01,0.99]$ to ensure positivity holds). The results show that the plug-in estimator inherits the large error in estimating the individual regression functions, while the DR-Learner achieves much smaller errors and adapts to the smoothness of the CATE. The X-Learner has MSE in between the two. Consistent with Theorem \ref{thm:drlearner}, the MSE of the DR-Learner approaches that of the oracle as the propensity score estimation error decreases (i.e., as the convergence rate gets faster). \\


\begin{figure}[h!]
    \centering
    \begin{subfigure}[t]{0.49\textwidth}
        \centering
        \includegraphics[width=\linewidth]{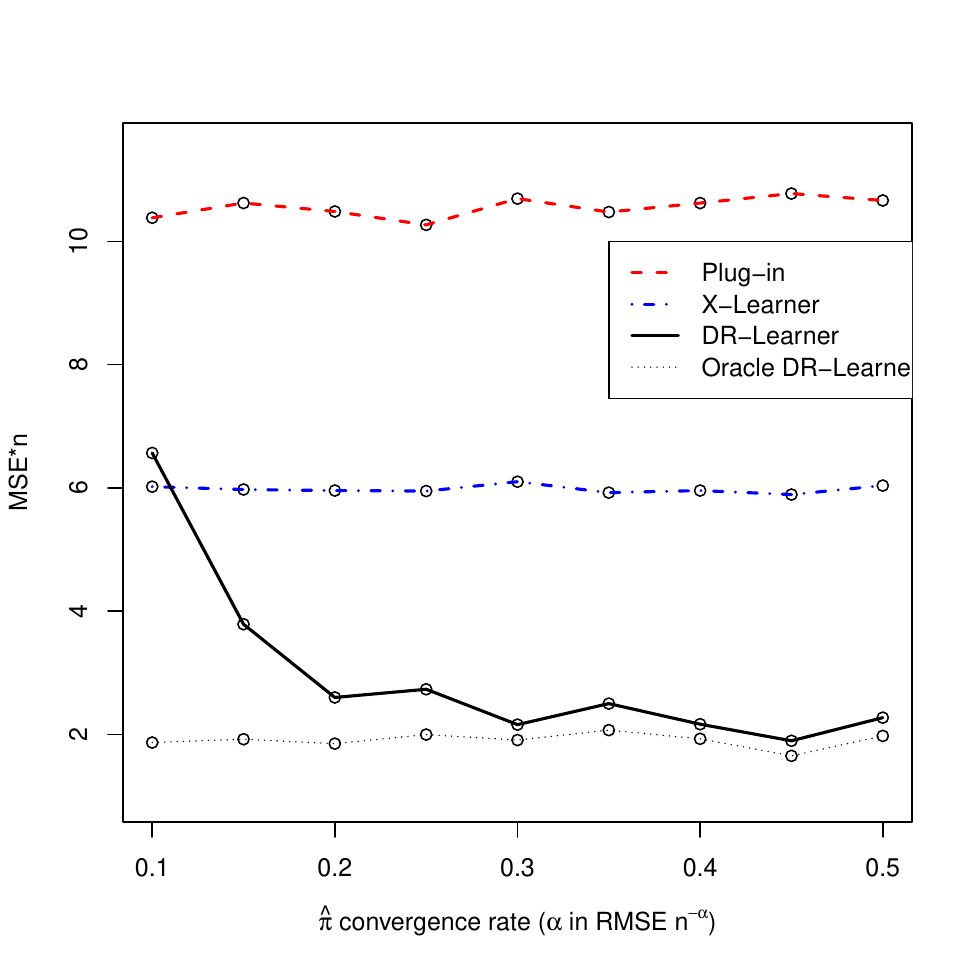} 
        \caption{Piecewise polynomial model from \ref{sec:motiv}} \label{fig:motiv}
    \end{subfigure}
    \hfill
    \begin{subfigure}[t]{0.49\textwidth}
        \centering
        \includegraphics[width=\linewidth]{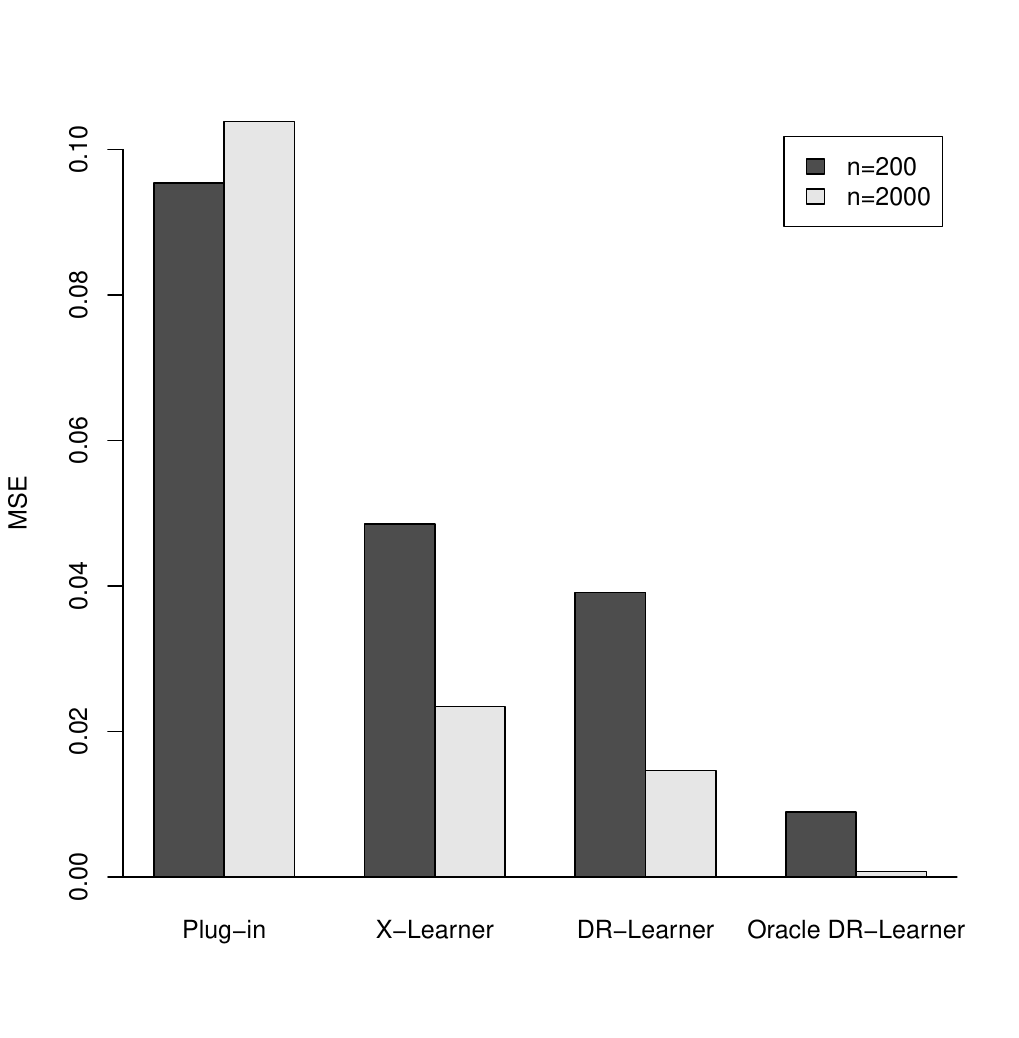} 
        \caption{High-dimensional model} \label{fig:lasso}
    \end{subfigure}
    \caption{Simulation results.}
\end{figure}


Next we consider a high-dimensional logistic model where $X=(X_1,...,X_d) \sim N(0,I_d)$, and 
$\logit\{\pi(x)\}   = \frac{1}{2\sqrt{\alpha}} \sum_{j=1}^\alpha x_j$, and $\logit\{ \mu_a(x) \} = \frac{1}{\sqrt{\beta}} \sum_{j=1}^\beta x_j $, 
so the propensity score and individual regressions are $\alpha$ and $\beta$ sparse, respectively, while the CATE is zero. The normalization of the  coefficients ensures $\pi(X) \in [0.2,0.8]$ with high probability and similarly for $\mu_a$.  We use standard cross-validation-tuned lasso for all model-fitting, i.e., to estimate the propensity scores, regression functions, and second-stage fits. Figure \ref{fig:lasso} shows mean squared errors for the four CATE methods when $d=500$ and $\alpha=\beta=50$, for $n=200$ and $n=2000$,  across 100 simulations (median of mean square errors is reported due to high skewness). As expected from Theorem \ref{thm:drlearner}, the DR-Learner is closer to the oracle than the plug-in or X-Learner, though this high-dimensional setup yields larger estimation error than the previous simulation model.  The relative performance of the DR-Learner seems to improve with sample size.

\section{Local Polynomial R-Learner} \label{sec:lprlearner}

The previous section gave general sufficient conditions under which the DR-Learner attains the oracle error rate of an estimator with direct access to the difference $Y^1-Y^0$, showing that this rate can in fact be achieved whenever the product of nuisance errors is of smaller order. This raises the crucial question of what happens when this product is not sufficiently small: in such regimes, is there any hope at still attaining the oracle error rate?  \\

The current section provides a first answer to this question, with a more refined analysis of a different estimator. Specifically, we analyze a double-sample-split local polynomial adaptation of the R-Learner \citep{nie2017quasi, robinson1988root}, which we call the lp-R-Learner for short. The R-Learner of \citet{nie2017quasi} is a nonparametric RKHS regression-based extension of the double-residual regression method of \citet{robinson1988root}.  A nonparametric series-based version of the R-Learner was also proposed in Example 4 of \citet{robins2008higher}, though assuming known propensity scores and not incorporating outcome regression. \citet{chernozhukov2017orthogonal} studied a lasso version of the R-learner. Some important distinctions between our results and those in previous work include the following: (i) our estimator is built from local polynomials, and incorporates a specialized form of sample splitting inspired by \citet{newey2018cross} for bias reduction, (ii) our sufficient conditions for attaining oracle efficiency are substantially weaker than the $n^{-1/4}$ rates in \citet{nie2017quasi} and \citet{chernozhukov2017orthogonal}, and (iii) we give specific rates of convergence outside the oracle regime. \\

We first describe the lp-R-Learner in detail, then give the main error bound result, which holds under a H\"{o}lder-smooth model, and is valid for a wide variety of tuning parameter choices. Following that, we optimize the bound with specific tuning parameter choices, under different sets of conditions, and discuss the resulting rates. \\

\subsection{Construction}

The algorithm below describes the lp-R-Learner construction. \\

\begin{algorithm}[lp-R-Learner] \label{alg:lprlearner}
Let $(D_{1a}^n,D_{1b}^n,D_2^n)$ denote three independent samples of $n$ observations of $Z_i=(X_i,A_i,Y_i)$. \\

Let $b: \R^d \mapsto \R^p$ denote the vector of basis functions consisting of all powers of each covariate, up to order $\lfloor\gamma\rfloor$, and all interactions up to degree $\lfloor\gamma\rfloor$ polynomials (cf.\ \citet{masry1996multivariate}). Let $K_{hx}(X)=\frac{1}{h^d}K\left( \frac{X-x}{h} \right)$ for $K: \R^d \mapsto \R$ a bounded kernel function with support $[-1,1]^d$, and $h$ a bandwidth parameter.
\begin{enumerate}
\item[Step 1.] Nuisance training:
\begin{enumerate}
\item Using $D_{1a}^n$, construct estimates $\widehat\pi_a$ of the propensity scores $\pi$. 
\item Using $D_{1b}^n$, construct estimates $\widehat\eta$ of the regression function $\eta=\pi \mu_1 + (1-\pi) \mu_0$, and estimates $\widehat\pi_b$ of the propensity scores $\pi$. 
\end{enumerate}
\item[Step 2.]  Localized double-residual regression:\\

Define $\widehat\tau_{r}(x)$ as the fitted value from a kernel-weighted least-squares regression (in the test sample $D_2^n$) of outcome residual $(Y-\widehat\eta)$ on basis terms $b$ scaled by the treatment residual $(A-\widehat\pi_b)$, with weights $\left(\frac{A-\widehat\pi_a}{A-\widehat\pi_b}\right) K_{hx}$. Thus $\widehat\tau_r(x)=b(0)^\T \widehat\theta$ for
\begin{equation}
\widehat\theta =  \argmin_{\theta \in \R^p} \ \Pn\left(  K_{hx}(X) \left\{ \frac{ A - \widehat\pi_a(X) }{A-\widehat\pi_b(X)} \right\} \left[ \Big\{ Y - \widehat\eta(X) \Big\} -  \theta^\T b(X-x) \Big\{ A - \widehat\pi_b(X) \Big\} \right]^2 \right). \label{eq:rlearner}
\end{equation}
\item[Step 3.]  Cross-fitting (optional): Repeat Step 1--2 twice, first using $(D_{1b}^n,D_2^n)$ for nuisance training and $D_{1a}^n$ as the test sample, and then  using $(D_{1a}^n,D_2^n)$ for training and $D_{1b}^n$ as the test sample. Use the average of the resulting three estimators of $\tau$ as the final estimator $\widehat\tau_r$. 
\end{enumerate}
\end{algorithm}

\bigskip

\begin{remark}
The kernel weights in the second step regression need to be multiplied by the ratio $(A-\widehat\pi_a)/(A-\widehat\pi_b)$ in order to ensure the independence of relevant products of nuisance estimators (i.e., that they are built from separate samples $D_{1a}^n$ and $D_{1b}^n$). This allows for multiplicative biases and thus faster rates due to undersmoothing, first introduced by \citet{newey2018cross} but for $\sqrt{n}$-estimable functionals. In other words, this ensures the bias of the lp-R-Learner  equals a product of biases of the nuisance estimators; other, different ratios used in this kernel weight would therefore generally not work in the same way.    \\
\end{remark}

 Figure \ref{fig:rlearner} gives a schematic illustrating the lp-R-Learner construction. \\

\begin{figure}[h!]
\begin{center}
\fbox{\includegraphics[width=.92\textwidth]{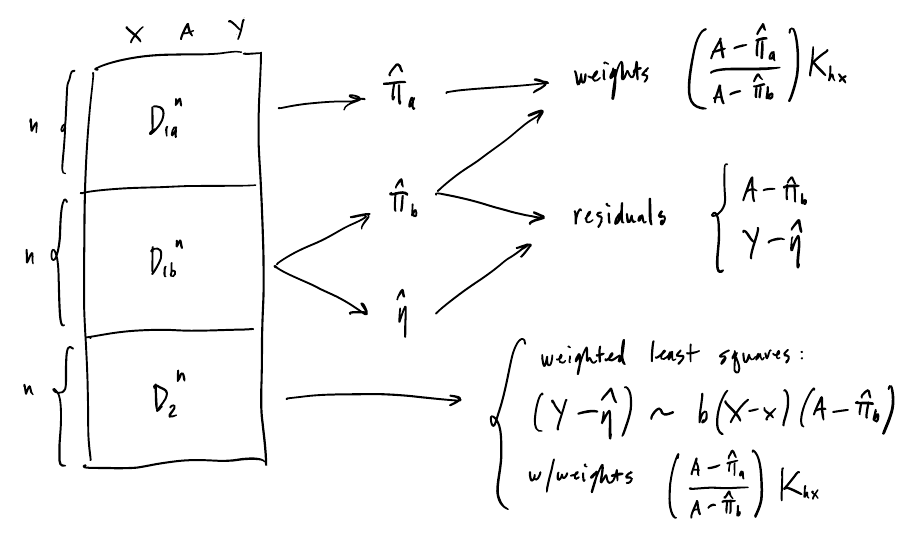}}
\caption{Schematic illustrating the lp-R-Learner approach. In the first stage, the nuisance functions $\widehat\pi_a$ and $(\widehat\pi_b,\widehat\eta)$ are estimated from training samples $D_{1a}^n$ and $D_{1b}^n$, respectively. In the second stage, these estimates are used in a kernel-weighted least squares regression of residuals $(Y-\widehat\eta)$ on residual-scaled basis terms $(A-\widehat\pi_b)b$, with weights $\left(\frac{A-\widehat\pi_a}{A-\widehat\pi_b}\right) K_{hx}$.} \label{fig:rlearner}
\end{center}
\end{figure}

\subsection{Main Error Bound \& Oracle Results}

Before giving the main error bound in this section, we first present the following condition on the nuisance estimators that we use in the analysis. At a high level, this condition requires the nuisance estimators to be linear smoothers with particular bias and variance bounds. \\

\begin{condition} \label{con:nuiscon}
The nuisance estimators $(\widehat\pi_a,\widehat\pi_b,\widehat\eta)$ are linear smoothers of the form 
\begin{align}
\widehat\pi_j(x) &= \sum_{i \in D_{1j}^n} w_{i\alpha}(x;X_{1j}^n) A_i \tag{1a} \label{eq:nuisfrm}  \\
\widehat\eta(x) &= \sum_{i \in D_{1b}^n} w_{i\beta}(x;X_{1b}^n) Y_i  \nonumber
\end{align}
with weights $w_{i\cdot}(x;X_{1\cdot}^n)$ depending on tuning parameter $k$, which are localized in the sense that 
\begin{align}
w_{i\cdot}(x;X_{1\cdot}^n)=0 \text{ whenever } \| X_i-x \|>1/k^{1/d} \label{eq:nuiswts} \tag{1b}
\end{align}
and which satisfy the conditional bias and variance bounds
\begin{align}
&\left| \E\{ \widehat\pi_j(x) \mid X_{1j}^n\} - \pi(x) \right| \lesssim k^{-\alpha/d} & \var\{ \widehat\pi_j(x) \mid X_{1j}^n\} \lesssim k/n \nonumber \\
&\left| \E\{ \widehat\eta(x) \mid X_{1b}^n\} - \eta(x) \right| \lesssim k^{-\beta/d} & \var\{ \widehat\eta(x) \mid X_{1b}^n\} \lesssim k/n. \label{eq:nuisbias} \tag{1c}
\end{align}

\end{condition}

\bigskip

Conditions \eqref{eq:nuisfrm}--\eqref{eq:nuisbias} are relatively standard. Many popular estimators take the form given in \eqref{eq:nuisfrm}, as discussed just before Theorem \ref{thm:linearsmoothers}. Condition \eqref{eq:nuiswts} holds for several prominent linear smoothers. For example, it holds for series estimators built from $k$ basis terms, if properly localized, and for standard kernel or local polynomial estimators when taking the bandwidth parameter as $h \sim k^{-d}$, as shown for example in Proposition 1.13 of \citet{tsybakov2009introduction}. \\

Condition \eqref{eq:nuisbias} also has been shown to hold for series and local polynomial estimators, for example, when the underlying regression function is appropriately smooth. In particular, under standard conditions, the bias part of \eqref{eq:nuisbias} would hold for these methods when the propensity score $\pi$ is $\alpha$-smooth and the regression function $\eta$ is $\beta$-smooth; we again refer to \citet{belloni2015some} and \citet{tsybakov2009introduction} for a review of related results.  For \eqref{eq:nuisbias} to hold uniformly over all $x \in \mathcal{X}$ would typically mean these bounds would only hold up to log factors; however our result will only require the bias to be controlled locally, near the point at which the CATE is to be estimated. \\

The next result gives error bounds on the lp-R-Learner from Algorithm \ref{alg:lprlearner}, under H\"{o}lder smoothness conditions. \\

\begin{theorem} \label{thm:lprlearner}
Let $\widehat\tau_r(x)$ denote the lp-R-Learner estimator detailed in Algorithm \ref{alg:lprlearner}. Assume: 

\begin{enumerate}
\item The estimator $\widehat\eta$ and observations $Z$ are bounded, and $X$ has density bounded above.  \label{ass:zestbd}
\item The estimators $\widehat\pi_j$ satisfy $\Pb\{\epsilon \leq \widehat\pi_j(x) \leq 1-\epsilon\}=1$ for some $\epsilon>0$. \label{ass:psbd}
\item The eigenvalues of the sample Gram matrices $\widehat{Q}_{hx}$ and $\widetilde{Q}_{hx}$ defined in \eqref{eq:qtildehat} are bounded away from zero in probability.  \label{ass:eigen}
\item The  nuisance estimators $(\widehat\pi_a,\widehat\pi_b,\widehat\eta)$ satisfy Condition \ref{con:nuiscon}, with the bias and variance bounds holding for all $x'$ such that $\| x' - x\| \leq h$. \label{ass:nuiserr}
\end{enumerate}
\medskip

Let $s=\frac{\alpha+\beta}{2}$ denote the average smoothness of the propensity score and regression function. 
Then, if the CATE $\tau(x)$ is $\gamma$-smooth and $k^{-(\alpha \wedge \beta)/d} \lesssim k/n$, we have
\begin{align*}
\widehat\tau_r(x) - \tau(x) &=  O_\Pb\left( h^{\gamma} + k^{-2s/d} + k^{-2\alpha/d} + \frac{1}{\sqrt{nh^d}}\left( 1 + \frac{k}{n} \right)  \right) .
\end{align*}
\end{theorem}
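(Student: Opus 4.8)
The plan is to exploit the explicit weighted-least-squares form of $\widehat\tau_r$ together with the Robinson-style residual identity, reducing everything to controlling a kernel-localized design matrix, a localized response vector, and products of \emph{independent} nuisance errors. First I would write out the normal equations for \eqref{eq:rlearner}: differentiating, the factor $(A-\widehat\pi_b)$ from the model term cancels against the denominator of the weight $\tfrac{A-\widehat\pi_a}{A-\widehat\pi_b}$, yielding $\widehat\tau_r(x) = b(0)^\T \widehat{Q}^{-1}\widehat{R}$ with $\widehat{Q} = \Pn[K_{hx}(A-\widehat\pi_a)(A-\widehat\pi_b)\,b(X-x)b(X-x)^\T]$ and $\widehat{R} = \Pn[K_{hx}(A-\widehat\pi_a)(Y-\widehat\eta)\,b(X-x)]$. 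Since $b(0)=e_1$ selects the intercept, I only need the first coordinate of $\widehat{Q}^{-1}\widehat{R}$. After the usual rescaling of $b(X-x)$ by $h$ so that design entries are $O(1)$, Assumption~\ref{ass:eigen} guarantees $\|\widehat{Q}^{-1}\|$ and $\|\widetilde{Q}^{-1}\|$ are $O_\Pb(1)$, letting me pass between $\widehat{Q}$ and its idealized counterpart $\widetilde{Q}$ (built with the true weight $\pi(1-\pi)$) via the resolvent identity $\widehat{Q}^{-1}-\widetilde{Q}^{-1} = \widehat{Q}^{-1}(\widetilde{Q}-\widehat{Q})\widetilde{Q}^{-1}$.

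The crux is two algebraic identities obtained by conditioning on the training folds and integrating over $(A,Y)\mid X$. Using $A^2=A$, $\E(A\mid X)=\pi$, and the Robinson decomposition $Y-\eta = (A-\pi)\tau + \epsilon$ with $\E(\epsilon\mid X,A)=0$, I obtain
$$\E[(A-\widehat\pi_a)(A-\widehat\pi_b)\mid X] = \pi(1-\pi) + (\pi-\widehat\pi_a)(\pi-\widehat\pi_b),$$
$$\E[(A-\widehat\pi_a)(Y-\widehat\eta)\mid X] = \pi(1-\pi)\tau(X) + (\pi-\widehat\pi_a)(\eta-\widehat\eta).$$
In each case the leading term reproduces the oracle design/response, and the remainder is a product of nuisance errors living on independent samples ($D_{1a}^n$ versus $D_{1b}^n$). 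I would then assemble $\widehat\tau_r(x)-\tau(x)$ into four pieces: (i) a local-polynomial approximation bias of the oracle fit, (ii) a design-side nuisance bias from $(\pi-\widehat\pi_a)(\pi-\widehat\pi_b)$, (iii) a response-side nuisance bias from $(\pi-\widehat\pi_a)(\eta-\widehat\eta)$, and (iv) a stochastic fluctuation from averaging over $D_2^n$.

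For piece (i), localization of $K_{hx}$ to $\|X-x\|\le h$ together with $\gamma$-smoothness of $\tau$ and the degree-$\lfloor\gamma\rfloor$ basis gives the standard bias $O_\Pb(h^\gamma)$ (cf.\ \citet{masry1996multivariate}). For (ii) and (iii) I would split each nuisance error into its conditional bias plus mean-zero noise; across the independent folds the noise$\times$noise cross terms vanish in expectation, so the bias$\times$bias contributions dominate, and the local bounds of Condition~\ref{con:nuiscon}\eqref{eq:nuisbias} — which Assumption~\ref{ass:nuiserr} supplies precisely on $\|x'-x\|\le h$, matching the kernel support — give $O_\Pb(k^{-2\alpha/d})$ for the design side (two propensity biases) and $O_\Pb(k^{-(\alpha+\beta)/d})=O_\Pb(k^{-2s/d})$ for the response side. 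For (iv), the variance of each summand in $\widehat{R}$ is of order $h^{-d}$ since $K_{hx}^2$ integrates to $O(h^{-d})$, giving the oracle rate $1/\sqrt{nh^d}$; the extra $k/n$ factor arises when the linear-smoother noise of the nuisances, controlled by the weight bound \eqref{eq:nuiswts} ($\sum_i w_i^2\lesssim k/n$), interacts with the $D_2^n$ average, producing the term $\tfrac{1}{\sqrt{nh^d}}\cdot\tfrac{k}{n}$, and the hypothesis $\tfrac{k/n}{\sqrt{nh^d}}\to 0$ is exactly what renders this cross term negligible.

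The main obstacle I expect is the joint control in piece (iv) together with the design-matrix inversion: one must simultaneously account for three independent sources of randomness — the two nuisance folds through their smoother weights and the test fold through the kernel average — and show that their interactions produce no term larger than $\tfrac{1}{\sqrt{nh^d}}(1+\tfrac{k}{n})$. Carefully isolating the bias$\times$bias products from the variance contributions, and verifying that replacing $\widehat{Q}^{-1}$ by $\widetilde{Q}^{-1}$ through the resolvent identity does not inflate any of these rates, is where the bookkeeping is most delicate; everything else reduces to standard kernel-moment and concentration estimates.
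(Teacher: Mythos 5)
Your proposal is correct and follows essentially the same route as the paper: the same normal-equations reduction to $b(0)^\T\widehat{Q}^{-1}\widehat{R}$, the same resolvent identity to isolate the design-side error, the same two conditional-expectation identities producing products of nuisance errors on independent folds (which is exactly how the paper obtains the $k^{-2s/d}$ and $k^{-2\alpha/d}$ bias terms), and the same use of the linear-smoother weight bounds to produce the $\tfrac{1}{\sqrt{nh^d}}\cdot\tfrac{k}{n}$ variance inflation. The only cosmetic difference is that the paper handles the oracle term by reinterpreting it as a local polynomial regression of $\phi/\nu$ on $X$ with scaled kernel $K_{hx}\nu$ and proving explicit weight-localization properties, whereas you cite the standard Masry-type bias bound directly; the content is the same.
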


\bigskip

Before detailing the result and implications of Theorem \ref{thm:lprlearner}, we first discuss the assumptions. The first part of Assumption \ref{ass:zestbd} is mostly to simplify presentation, and could be weakened at the cost of added complexity; the second part  ensuring $X$ has bounded density is more crucial, but still mild. Assumption \ref{ass:psbd} is standard in the causal literature, and in theory could be guaranteed by simply thresholding propensity score estimates; however, extreme propensity values (i.e., positivity violations) are an important issue in practice, especially in the nonparametric and high-dimensional setup \citep{d2017overlap}.  Assumption \ref{ass:eigen} is relatively standard (see, e.g., Assumption (LP1) of \citet{tsybakov2009introduction}) but would restrict how $n$ and $h$ scale; when $d$ is fixed, standard bandwidth choices should suffice. Assumption \ref{ass:nuiserr} is arguably most crucial, and does the most work in the proof (together with the specialized sample splitting); however, as detailed in the discussion of  Condition \ref{con:nuiscon} prior to the theorem statement, Assumption \ref{ass:nuiserr} uses standard conditions commonly found in the nonparametric regression literature. \\

Now we give some discussion and interpretation of the (in-probability) error bound of Theorem \ref{thm:lprlearner}. The first three terms are the bias, and the last two are the variance (on the standard deviation scale). The bias has three components. The first $h^\gamma$ bias term comes from the bias of an oracle estimator with access to the true propensity score and regression function, and matches the bias of an oracle with direct access to the difference $Y^1-Y^0$. The other two bias terms come from nuisance estimation: the first $k^{-2s/d}$ term is the product of the biases of the propensity score and regression estimators, whereas the second $k^{-2\alpha/d}$ term is the squared bias of the propensity score estimator. If the propensity score is at least as smooth as the regression function, then the first $k^{-2s/d}$ term will dominate. Some heuristic intuition about why these specific bias terms arise is as follows: by virtue of its least squares construction, the lp-R-Learner can be viewed as a product of the inverse of an ``$X^\T X$-like'' term involving products of $\widehat\pi_a$ and $\widehat\pi_b$, and an ``$X^\T Y$-like'' term involving products of $\widehat\pi_a$ and $\widehat\eta$. 
The variance has two components. As with the bias, the first $1/\sqrt{nh^d}$ term is the standard deviation of an oracle estimator with access to the true nuisance functions. The second term is a product of this oracle standard error with $k/n$, which is the additional variance coming from nuisance estimation (in fact $k/n$ is the product of the standard deviations of the nuisance estimators). In standard regimes the tuning parameter $k$ would have to be chosen of smaller order than $n$ (e.g., $k\log k/n \rightarrow 0$ as in \citet{belloni2015some} and elsewhere) in order for Condition \ref{con:nuiscon} to hold, making the variance contribution from nuisance estimation asymptotically negligible. This last point will be discussed further shortly. \\

Now we give conditions under which the oracle rate is achieved by the lp-R-Learner, which we conjecture are not only sufficient but also necessary conditions. \\

\begin{corollary} \label{cor:lproracle}
Suppose the assumptions of Theorem \ref{thm:lprlearner} hold. Further assume $\alpha \geq \beta$ so the propensity score is smoother than the regression function, and take
\begin{enumerate}
\item $h \sim n^{-1/(2\gamma+d)}$, and
\item $k \sim n/ \log^2 n$ so that $k \log k/n \rightarrow 0$.
\end{enumerate}
Then, up to log factors,
$$ \widehat\tau_r(x) - \tau(x) = O_\Pb\left( n^{-\gamma/(2\gamma+d)} + n^{-2s/d} \right) $$
and the oracle rate is achieved if the average nuisance smoothness satisfies $s \geq \frac{d/4}{1 + d/2\gamma}$.
\end{corollary}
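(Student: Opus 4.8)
The plan is to treat this purely as a specialization of Theorem~\ref{thm:lprlearner}: substitute the prescribed tuning choices $h \sim n^{-1/(2\gamma+d)}$ and $k \sim n/\log^2 n$ into the five-term in-probability bound
$$ \widehat\tau_r(x) - \tau(x) = O_\Pb\left( h^\gamma + k^{-2s/d} + k^{-2\alpha/d} + \tfrac{1}{\sqrt{nh^d}}\bigl(1 + \tfrac{k}{n}\bigr) \right), $$
simplify term by term, and then read off the oracle threshold. First I would verify that the side condition $\frac{k/n}{\sqrt{nh^d}} \to 0$ required by the theorem holds under these choices: here $k/n \asymp 1/\log^2 n \to 0$ while $nh^d \asymp n^{2\gamma/(2\gamma+d)} \to \infty$, so the ratio vanishes and the theorem applies.

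Next I would evaluate each term. The leading bias term gives $h^\gamma \asymp n^{-\gamma/(2\gamma+d)}$, which is exactly the oracle rate, and the leading variance term $\frac{1}{\sqrt{nh^d}} \asymp n^{-\gamma/(2\gamma+d)}$ matches it. The remaining variance term carries the extra factor $k/n \asymp 1/\log^2 n$, so it is of strictly smaller order and drops out. For the two nuisance-bias terms, $k^{-2s/d} \asymp n^{-2s/d}$ up to a $(\log n)^{4s/d}$ factor, absorbed into the ``up to log factors'' qualifier; and since $\alpha \geq \beta$ forces $\alpha \geq s = (\alpha+\beta)/2$, we get $k^{-2\alpha/d} \leq k^{-2s/d}$, so the squared-propensity-bias term is dominated by the product-bias term. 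Collecting the survivors leaves $O_\Pb\bigl(n^{-\gamma/(2\gamma+d)} + n^{-2s/d}\bigr)$ up to log factors, as claimed.

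Finally, to obtain the oracle-efficiency threshold I would compare the surviving nuisance term to the oracle rate. The oracle rate $n^{-\gamma/(2\gamma+d)}$ dominates precisely when $n^{-2s/d} \lesssim n^{-\gamma/(2\gamma+d)}$, i.e.\ when $\frac{2s}{d} \geq \frac{\gamma}{2\gamma+d}$. Rearranging gives $s \geq \frac{\gamma d}{2(2\gamma+d)}$, and since $\frac{\gamma d}{2(2\gamma+d)} = \frac{d/4}{1 + d/2\gamma}$, this is exactly the stated condition $s \geq \frac{d/4}{1+d/2\gamma}$.

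Since all of the genuine analytic work lives inside Theorem~\ref{thm:lprlearner}, there is no substantive obstacle here beyond careful bookkeeping. The one point needing mild care is the log-factor accounting forced by $k \sim n/\log^2 n$: one must confirm that these factors surface only in the nuisance-bias term (and in the already-negligible second variance term), so that the headline rate is correctly reported ``up to log factors,'' and that the domination $k^{-2\alpha/d} \lesssim k^{-2s/d}$ genuinely follows from $\alpha \geq \beta$ rather than requiring any separate comparison of smoothness exponents.
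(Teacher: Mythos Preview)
Your proposal is correct and matches the paper's intended derivation: the corollary is stated without a separate proof precisely because it follows by substituting the prescribed $h$ and $k$ into the bound of Theorem~\ref{thm:lprlearner} and simplifying term by term, exactly as you do. Your bookkeeping---verifying the side condition, absorbing $k^{-2\alpha/d}$ into $k^{-2s/d}$ via $\alpha \geq s$, and rearranging the exponent inequality to obtain the threshold $s \geq \frac{d/4}{1+d/2\gamma}$---is all sound.
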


\bigskip

Corollary \ref{cor:lproracle} shows that an undersmoothed lp-R-Learner can be oracle efficient under weaker conditions than the error bound given in Theorem \ref{thm:drlearner} would indicate for the DR-Learner. \\

\begin{remark}
Note taking $k \sim n/\log^2 n$ amounts to undersmoothing as much as possible: it drives down nuisance bias, letting the variance $k/n$ go to zero only very slowly, since the contribution of the latter is asymptotically negligible for CATE estimation as long as $k \lesssim n$. In general, undersmoothing can often require specific knowledge of smoothness parameters, and data-driven approaches  are often lacking. However, it is worth noting that in Corollary \ref{cor:lproracle} the choice of $k$ only depends on the sample size, and not on  the underlying nuisance smoothness, for example, making it more feasible to implement. For example, one could set $k$ and then use cross-validation or other approaches to select $h$ \citep{van2003cross,bibaut2017data}; however we leave a formal study of this to future work.  \\
\end{remark}

The result in Corollary \ref{cor:lproracle} is remniscient of a similar phenomenon in marginal ATE estimation, where \citet{robins2009semiparametric} showed that the condition $s \geq d/4$ is necessary and sufficient for the existence of root-n consistent estimators of the ATE functional $\E\{ \E(Y \mid X,A=1)\}$. Our result thus shows that $s \geq d/4$ is also sufficient for oracle efficient estimation of the CATE, but now the oracle rate is $n^{-\gamma/(2\gamma+d)}$ rather than root-n, and so there is in fact a weaker bar for oracle efficiency (namely $s \geq \frac{d/4}{1+d/2\gamma}$), depending on the smoothness $\gamma$. At one extreme, when the CATE is infinitely smooth, the condition we give for oracle efficiency of the lp-R-Learner recovers the usual $s \geq d/4$ condition for the ATE. At the other extreme, when the CATE is non-smooth, oracle efficiency can be much easier to achieve (e.g., if the CATE is only $\gamma=1$-smooth, it is only required that $s \geq 1/2$ even for arbitrarily large dimension $d$). We conjecture that the condition $s \geq \frac{d/4}{1+d/2\gamma}$ is not only sufficient but may also be necessary for oracle effiency in the above H\"{o}lder model, making the proposed lp-R-Learner minimax optimal in this regime when $\alpha \geq \beta$; however, we leave a proof of this to future work. \\

When $s < \frac{d/4}{1+d/2\gamma}$ and the oracle rate is not achieved, the rate $n^{-2s/d}$ is slower than the usual functional estimation rate $n^{-4s/(4s+d)}$, which is minimax optimal for the ATE if the covariate density is smooth enough \citep{robins2009semiparametric}, and also occurs for simpler functionals like the expected density \citep{bickel1988estimating, birge1995estimation}. To illustrate this gap, if $s=d/8$ then the rate in Corollary \ref{cor:lproracle} is $n^{-1/4}$ while the usual functional minimax rate is $n^{-1/3}$. \\

\begin{remark}
There is  a trade-off between the DR-Learner and lp-R-Learner. In short, the DR-Learner provides somewhat more general  guarantees, and is computationally more straightforward, while the lp-R-Learner can achieve faster rates in somewhat more specialized Holder smoothness models, but is also somewhat more difficult to compute in practice, and requires undersmoothing.  \\
\end{remark}

\subsection{Faster Rates with Known Covariate Density} \label{sec:lprfast}

Here we briefly  consider how the lp-R-Learner rates from Corollary \ref{cor:lproracle} can be improved by exploiting structure in the covariate density, as in \citet{robins2008higher,robins2017minimax}. This is somewhat paradoxical since the CATE itself does not depend on the covariate density. \\

Some intuition for a possible rate improvement is as follows. If the density of the covariates $X$ was known, then one could construct nuisance estimators $(\widehat\pi_a,\widehat\pi_b,\widehat\eta)$ satisfying Condition \ref{con:nuiscon} without any restriction on the tuning parameter $k$, such as the $k\log k/n \rightarrow 0$ restriction employed in Corollary \ref{cor:lproracle}. For example, a series estimator $\widehat\eta$ could satisfy Condition \ref{con:nuiscon} for any choice of $k$, if it took the form
\begin{equation}
\widehat\eta(x) = b(x)^\T \Big[ \E\left\{b(X) b(X)^\T \right\} \Big]^{-1} \Pn\{ b(X) Y\}
\end{equation}
where $b$ is a vector of appropriate basis functions (different from those used in the lp-R-Learner construction), and similarly for $\widehat\pi$. Intuitively this is because restrictions like $k\log k/n \rightarrow 0$ are only required to ensure the inverse of the sample Gram matrix $\Pn(bb^\T)$ converges to a limit (in operator norm), whereas if the density of $X$ was known, then one could simply compute the population Gram matrix $\E(bb^\T)$ exactly. \\

We conjecture however that the $n^{-2s/d}$ rate from Corollary \ref{cor:lproracle} may not be improvable in the non-random fixed $X$ case. A fixed design setup was recently considered by \citet{gao2020minimax}, though in a somewhat specialized model where the propensity scores have zero smoothness. In fact when $\alpha=0$ our rate matches their lower bound in the non-oracle regime.  \\

The next result gives rates for the lp-R-Learner when there are no restrictions on the choice of nuisance tuning parameter $k$. \\

\begin{corollary} \label{cor:lprknowndens}
Suppose the assumptions of Theorem \ref{thm:lprlearner} hold, and in particular suppose Assumption \ref{ass:nuiserr} holds without any restrictions on $k$ (e.g., as if the density of $X$ is known). \\

Then if $\alpha \geq \beta$ the convergence rate in Theorem \ref{thm:lprlearner} is optimized by taking
$$ h \sim n^{\frac{-3s}{2s\gamma + (s+\gamma)d}} \ \text{ and } \ 
k \sim n^{\frac{3\gamma d/2}{2s\gamma + (s+\gamma)d}} $$
which gives
$$ \widehat\tau(x) - \tau(x) = O_\Pb\left( n^{-\gamma/(2\gamma+d)} + n^{\frac{-3s}{2s+d(1+s/\gamma)}} \right) , $$
and so the oracle rate is achieved if the average nuisance smoothness satisfies $s \geq \frac{d/4}{1 + d/2\gamma}$.
\end{corollary}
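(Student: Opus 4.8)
The plan is to treat the statement as a deterministic optimization of the in-probability rate expression from Theorem~\ref{thm:lprlearner}, now freed from the tuning restriction $k\log k/n\to 0$ because Assumption~\ref{ass:nuiserr} (via Condition~\ref{con:nuiscon}) is posited to hold for every $k$. First I would simplify the bound. Since $\alpha\geq\beta$ gives $s=(\alpha+\beta)/2\leq\alpha$, we have $k^{-2\alpha/d}\lesssim k^{-2s/d}$, so the squared-bias term of the propensity score is absorbed and the bound reduces to $h^\gamma + k^{-2s/d} + (nh^d)^{-1/2}(1+k/n)$. This exhibits four competing contributions: an oracle bias $h^\gamma$, an oracle standard error $(nh^d)^{-1/2}$, a nuisance-bias term $k^{-2s/d}$, and a nuisance-variance term $k/(n^{3/2}h^{d/2})$.

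Next I would carry out the minimization in two nested stages. For fixed bandwidth $h$, the only terms depending on $k$ are the decreasing nuisance bias $k^{-2s/d}$ and the increasing nuisance variance $k/(n^{3/2}h^{d/2})$; equating them gives $k\asymp (n^{3/2}h^{d/2})^{d/(2s+d)}$ and a combined nuisance contribution of order $n^{-3s/(2s+d)}h^{-sd/(2s+d)}$. Substituting this back leaves the function $h^\gamma + (nh^d)^{-1/2} + n^{-3s/(2s+d)}h^{-sd/(2s+d)}$ of $h$ alone. The first two terms are exactly the oracle bias--variance tradeoff, minimized at $h\sim n^{-1/(2\gamma+d)}$ with the oracle value $n^{-\gamma/(2\gamma+d)}$; balancing instead the oracle bias $h^\gamma$ against the combined nuisance term gives, after a short computation using $\gamma + sd/(2s+d)=Q/(2s+d)$ with $Q=2s\gamma+(s+\gamma)d$, the choice $h\sim n^{-3s/Q}$ and rate $n^{-3s\gamma/Q}$. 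Feeding this $h$ into the $k$-formula produces exactly $k\sim n^{3\gamma d/(2Q)}$, and I would check that the side condition $(k/n)(nh^d)^{-1/2}\to 0$ reduces to $n^{-3s\gamma/Q}\to 0$ and so holds.

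To read off the final rate I would note that the optimized error is governed by the slower of the oracle rate $n^{-\gamma/(2\gamma+d)}$ and the nuisance rate $n^{-3s\gamma/Q}$, which is precisely the displayed bound $n^{-\gamma/(2\gamma+d)} + n^{-3s\gamma/Q}$; using $2s+d(1+s/\gamma)=Q/\gamma$ rewrites the second exponent as $3s/(2s+d(1+s/\gamma))$. Oracle efficiency is then equivalent to the nuisance rate being at least as fast as the oracle rate, i.e.\ $3s\gamma/Q\geq\gamma/(2\gamma+d)$, which rearranges to $4s\gamma+2sd\geq\gamma d$, equivalently $s\geq \frac{\gamma d}{2(2\gamma+d)}=\frac{d/4}{1+d/2\gamma}$.

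The main obstacle is conceptual rather than computational: the single tuning $(h,k)$ displayed is the genuine minimizer only in the non-oracle regime $s<\frac{d/4}{1+d/2\gamma}$, where the nuisance term binds; once $s$ exceeds the threshold, the oracle standard error $(nh^d)^{-1/2}$ evaluated at $h\sim n^{-3s/Q}$ would begin to dominate (the undersmoothing that drives down nuisance bias inflates variance), and the oracle rate is instead recovered by reverting to the oracle bandwidth $h\sim n^{-1/(2\gamma+d)}$, under which the combined nuisance contribution is negligible. I would therefore present the optimization as a maximum over these two regimes, taking care to verify that at the oracle bandwidth the nuisance contribution indeed satisfies $n^{-3s/(2s+d)}h^{-sd/(2s+d)}\lesssim n^{-\gamma/(2\gamma+d)}$ exactly when $s\geq\frac{d/4}{1+d/2\gamma}$, so that the two descriptions agree at the threshold and the combined bound holds throughout.
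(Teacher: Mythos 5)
Your proposal is correct and follows essentially the same route the paper intends: the corollary has no separate proof in the paper and is obtained exactly by absorbing $k^{-2\alpha/d}$ into $k^{-2s/d}$ via $\alpha\geq\beta$, balancing the nuisance bias $k^{-2s/d}$ against the nuisance variance $k/(n^{3/2}h^{d/2})$, and then balancing the result against $h^\gamma$, with all your algebra (the expressions for $h$, $k$, the rate $n^{-3s\gamma/Q}$, and the threshold $s\geq\frac{d/4}{1+d/2\gamma}$) checking out. Your closing observation that the displayed tuning is the true optimizer only in the non-oracle regime, and that one reverts to $h\sim n^{-1/(2\gamma+d)}$ above the threshold, is a correct and worthwhile clarification of a point the corollary statement leaves implicit.
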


\bigskip

\begin{remark}
When the density of the covariates $X$ is unknown but smooth and estimable at fast enough rates, we expect results similar to those in Corollary \ref{cor:lprknowndens} to still hold. This phenomenon also occurs for the ATE functional \citep{robins2008higher,robins2017minimax}. However, here the analysis of the lp-R-Learner becomes much more complicated, so we leave this to future work. \\
\end{remark}

When the nuisance tuning parameter $k$ is unrestricted, one needs to balance all three dominant terms from Theorem \ref{thm:lprlearner}: the two bias terms $h^\gamma$ and $k^{-2s/d}$, as well as the variance term $\frac{k/n}{\sqrt{nh^d}}$. Notice the ``elbow'' at $s \geq \frac{d/4}{1 + d/2\gamma}$ occurs here even when the density is known, perhaps again suggesting that this condition for oracle efficiency may be both sufficient and necessary. Whether the rate $n^{\frac{-3s}{2s+d(1+s/\gamma)}}$ is minimax optimal or not in the non-oracle regime is unknown; we will pursue this in future work. \\

Note the rate $n^{\frac{-3s}{2s+d(1+s/\gamma)}}$ is slightly slower than the usual functional estimation rate $n^{-4s/(4s+d)}$. For example, when $\gamma \rightarrow \infty$ and $s=d/8$, the CATE rate from Corollary \ref{cor:lprknowndens} is $n^{-3/10}$ whereas the usual functional estimation rate is $n^{-1/3}$. Thus there do appear to be  benefits of exploiting structure in the covariate density, but whether the gap between the improved rate of Corollary \ref{cor:lprknowndens} and the usual rate can be closed is  unclear. An interesting more  philosophical question is whether structure in the covariate density should be exploited for CATE estimation in practical settings, since the CATE itself does not depend on the covariate density. \\

\section{Discussion}

In this paper we studied the problem of estimating conditional  treatment effects, giving new results that apply to a wider variety of methods and that better exploit when the CATE itself is structured, compared to the current state-of-the-art. Sections \ref{sec:genoracle} and \ref{sec:drlearner} were more practically oriented, giving model-free yet informative error bounds for general regression with estimated outcomes, and the DR-Learner method for CATE estimation, along with examples from smooth and sparse models, illustrating the flexibility of Proposition \ref{prop:oracle} and Theorem \ref{thm:drlearner}. In contrast, Section \ref{sec:lprlearner} was more theoretically oriented, aiming instead at understanding the fundamental statistical limits of CATE estimation, i.e., the smallest possible achievable error. We derived upper bounds on this error with a specially constructed (and tuned) estimator called the lp-R-Learner. Namely we showed that in a H\"{o}lder model, oracle efficiency is possible under weaker conditions on the nuisance smoothness than indicated from the DR-Learner error bound given in Theorem \ref{thm:drlearner} -- which itself is an improvement over conditions given in previous work. \\

Figure \ref{fig:rates} summarizes the various error rates in this paper graphically, in an illustrative setup where the dimension is $d=20$, CATE smoothness is $\gamma=2d$, and nuisance smoothness $\alpha=\beta=s$ varies from 0--10. This shows the various gaps between methods/rates, in an interesting regime where the CATE is relatively smooth compared to the nuisance functions. \\

\begin{figure}[h!]
\begin{center}
{\includegraphics[width=.6\textwidth]{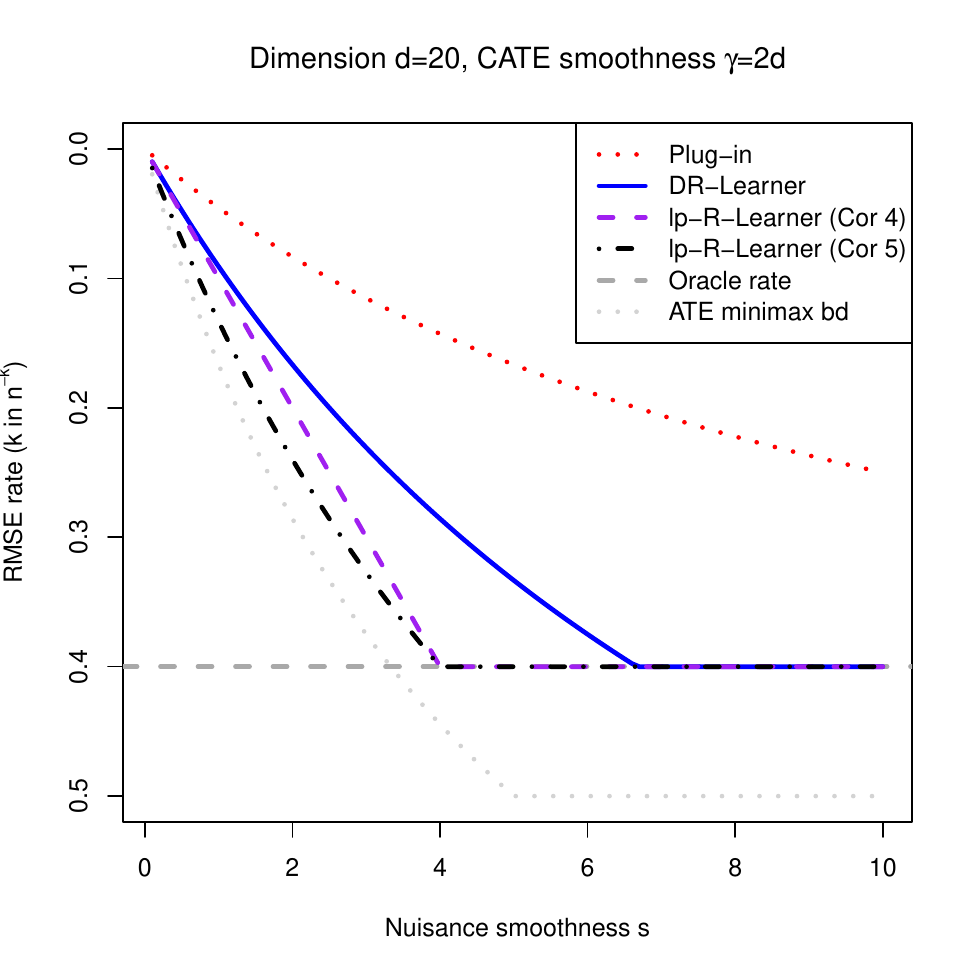}}
\caption{Illustration of error bounds in this work, as a function of nuisance smoothness $s=\alpha=\beta$. The dotted gray line represents the classical minimax lower bound for functional estimation, which is $\sqrt{n}$ when $s \geq d/4$. The dashed gray line is the oracle rate that would be achieved by a minimax optimal regression using $Y^1-Y^0$. The red line is the bound for a plug-in estimator, which is just the rate for estimating the individual regression functions. The blue line is the rate for the DR-Learner given in Theorem \ref{thm:drlearner}, which matches the oracle under conditions given there. The purple line is the rate achieved by the lp-R-Learner when tuned as in Corollary \ref{cor:lproracle}, which matches the oracle when $s \geq \frac{d/4}{1+d/2\gamma}$. The black line is the improved rate achieved by the lp-R-Learner when tuned as in Corollary \ref{cor:lprknowndens}, e.g., with known covariate density.} \label{fig:rates}
\end{center}
\end{figure}

Our work raises some interesting open questions, some of the more immediate of which we list here for reference: 

\medskip

\begin{enumerate}
\item Can the error bounds in Proposition \ref{prop:oracle} and Theorem \ref{thm:drlearner} be improved without committing to particular first- or second-stage methods? 
\item What rates can be achieved by specialized sample splitting and tuning of a DR-Learner, rather than an R-Learner?
\item What are the analogous results of Corollaries \ref{cor:drlsmooth} for other function classes?
\item Can the error bounds in Theorem \ref{thm:lprlearner} be obtained with other methods? A natural alternative is a higher-order influence function approach \citep{robins2008higher,robins2017minimax}, which could avoid the $\alpha \geq \beta$ condition, perhaps at the cost of some extra complexity.
\end{enumerate}

\bigskip

We have obtained partial answers to some of these questions, but most remain unanswered and left for future work. One of the deepest open questions is whether the rates given in Corollaries  \ref{cor:lproracle} and  \ref{cor:lprknowndens} are minimax optimal or not (in the fixed and random design setups, respectively).   \\

\section*{Acknowledgements}

This research was supported by NSF DMS Grant 1810979, NSF CAREER Award 2047444, and NIH R01 Grant LM013361-01A1.The author thanks Sivaraman Balakrishnan, Matteo Bonvini, Aaron Fisher, Virginia Fisher, Jamie Robins, and Larry Wasserman for very helpful discussions.  \\

\section*{References}
\vspace{-1cm}
\bibliographystyle{abbrvnat}
\bibliography{/Users/kennedye/Desktop/flashdrive/research/bibliography}

\pagebreak

\section{Proofs}
\label{sec:proofs}

\subsection{Proof of Theorem \ref{thm:linearsmoothers}}

Here we use the notation of Proposition \ref{prop:oracle}.
Letting $T_n = \widehat{m}(x) - \widetilde{m}(x) - \widehat\E_n\{\widehat{b}(X) \mid X=x\}$  denote the numerator of the left-hand side of \eqref{eq:stability}, we will show that
$$ T_n = O_\Pb\left( \frac{\| \widehat{f}-f \|_{w^2}}{\|\sigma \|_{w^2}} R_n^*(x) \right) $$
which yields the result when $1/\|\sigma\|_{w^2} = O_\Pb(1)$. 
First note that for linear smoothers we have
$$ T_n= \widehat\E_n\{ \widehat{f}(Z) - f(Z) - \widehat{b}(X) \mid X=x\} = \sum_{i=1}^n w_i(x;X^n) \Big\{ \widehat{f}(Z_i) - f(Z_i) - \widehat{b}(X_i) \Big\}  $$
and this term has mean zero since
\begin{align*}
\E\Big\{ \widehat{f}(Z_i) - f(Z_i) - \widehat{b}(X_i) \mid D^n, X^n \Big\} = \E\Big\{ \widehat{f}(Z_i) - f(Z_i) - \widehat{b}(X_i) \mid D^n, X_i \Big\}  = 0
\end{align*}
by definition of $\widehat{b}$ and iterated expectation. Therefore
 \begin{align}
 \E&(T_n^2 \mid D^n, X^n ) = \var\left[ \sum_{i=1}^n w_i(x;X^n)  \Big\{ \widehat{f}(Z_i) - f(Z_i) - \widehat{r}(X_i) \Big\} \Bigm| D^n, X^n \right] \nonumber \\
& =  \sum_{i=1}^n w_i(x;X^n)^2 \ \var\Big\{ \widehat{f}(Z_i) - f(Z_i) \mid D^n, X_i \Big\}  \leq \| \widehat{f} - f \|_{w^2}^2 \sum_{i=1}^n w_i(x;X^n)^2 \label{eq:lsep1}
 \end{align}
 where the second line follows since $\widehat{f}(Z_i) - f(Z_i)$ are independent given the training data, and the third since $\var(\widehat{f}-f \mid D^n,X) \leq \E\{(\widehat{f}-f)^2 \mid D^n, X\}$ and by definition of $\| \cdot \|_{w^2}$. 
Further note that $R_n^*(x)^2$ equals
\begin{align}
\E&\left[ \Big\{ \widetilde{m}(x) - m(x) \Big\}^2 \right]=  \E\left( \left[ \sum_{i=1}^n w_i(x;X^n) \Big\{ f(Z_i) - m(X_i) \Big\} + \sum_{i=1}^n w_i(x;X^n) m(X_i) -  m(x) \right]^2 \right) \nonumber \\
 &\hspace{.5in} = \E \left\{ \sum_{i=1}^n w_i(x;X^n)^2 \ \sigma(X_i)^2 \right\} + \E\left[ \left\{ \sum_{i=1}^n w_i(x;X^n) m(X_i) -  m(x) \right\}^2 \right] \nonumber  \\
 &\hspace{.5in}  \geq \E\left\{  \| \sigma \|_{w^2}^2 \sum_{i=1}^n w_i(x;X^n)^2 \right\} \label{eq:lsep2}
 \end{align}
where the second line  follows from iterated expectation and independence of the samples, and the third by definition of $\| \cdot \|_{w^2}$ (and since the squared bias term from the previous line is non-negative). Therefore
\begin{align*}
\Pb \left\{   \frac{ \|\sigma\|_{w^2} |T_n | }{\| \widehat{f}-f \|_{w^2} R_n^*(x) } \geq t  \right\} &= \E\left[ \Pb \left\{ \frac{ \|\sigma\|_{w^2} |T_n | }{\| \widehat{f}-f \|_{w^2} R_n^*(x) } \geq t \Bigm| D^n, X^n \right\} \right] \\
&\leq \left( \frac{1}{t^2 R_n^*(x)^2} \right) \E\left\{  \| \sigma\|_{w^2}^2  \E\left( \frac{ T_n^2}{\| \widehat{f}-f \|_{w^2}^2  } \Bigm| D^n, X^n  \right) \right\} \\
&\leq  \left( \frac{1}{t^2 R_n^*(x)^2 } \right)  \E\left\{  \| \sigma \|_{w^2}^2 \sum_{i=1}^n w_i(x;X^n)^2 \right\}  \leq \frac{1}{ t^2}
\end{align*}
where the second line follows by Markov's inequality, the third from the bound in \eqref{eq:lsep1} and iterated expectation, and the last from the bound in \eqref{eq:lsep2}. Therefore the result follows since we can always pick $t^2=1/\epsilon$ to ensure the above probability is no more than any $\epsilon$. \\

\subsection{Proof of Proposition \ref{prop:oracle}}

Stability and consistency together imply $$ \widehat{m}(x) - \widetilde{m}(x) = \widehat\E_n\{ \widehat{b}(X) \mid X=x\} + o_\Pb\left(R_n^*(x) \right)$$  by definition. Therefore if $\widehat\E_n\{ \widehat{b}(X) \mid X=x\}=o_\Pb\left(R_n^*(x) \right)$ the result follows. \\

\subsection{Proof of Proposition \ref{prop:smoothbias}}

We have
\begin{align*}
\widehat\E_n\{ \widehat{b}(X) \mid X=x\} &= \sum_{i=1}^n w_i(x;X^n) \widehat{b}_1(X_i) \widehat{b}_2(X_i) \\
&\leq \left\{ \sum_{i=1}^n {|w_i(x;X^n)|} | \widehat{b}_1(X_i)|^p \right\}^{1/p} \left\{ \sum_{i=1}^n {|w_i(x;X^n)|} | \widehat{b}_2(X_i)|^q \right\}^{1/q}  \\
&= \left( \sum_{i=1}^n | w_i(x;X^n) | \right) \| \widehat{b}_1 \|_{w,p} \| \widehat{b}_2 \|_{w,q}
\end{align*}
where the second line follows by Holder's inequality, and the last by definition of the norms.  \\

\subsection{Proof of Theorem \ref{thm:drlearner}}

This follows from Proposition \ref{prop:oracle}, letting $f(Z)=\varphi(Z)$ and noting that 
\begin{align*}
\widehat{b}(x) &= \left\{ \frac{\pi(x)}{\widehat\pi(x)} - 1 \right\} \Big\{ \mu_1(x) - \widehat\mu_1(x) \Big\}  - \left\{ \frac{1-\pi(x)}{1-\widehat\pi(x)} - 1 \right\} \Big\{ \mu_0(x) - \widehat\mu_0(x) \Big\}  \\
&=\sum_{j=0}^1 \left\{ \frac{\widehat\pi(x) - \pi(x) }{j \widehat\pi(x) + (1-j) (1-\widehat\pi(x))} \right\}   \Big\{ \widehat\mu_j(x) - \mu_j(x) \Big\} \equiv \widehat{b}_0(x) + \widehat{b}_1(x)
\end{align*}
by iterated expectation. \\

\subsection{Proof of Corollary \ref{cor:drlsmooth}}

The rate result follows since
\begin{align*}
 \widehat{m}(x) - m(x) &=  \widehat{m}(x) - \widetilde{m}(x) + \widetilde{m}(x) - m(x) \\
 &= O_\Pb\left( n^{-\left(\frac{1}{2+d/\alpha} + \frac{1}{2 + d/\beta}\right)} \right)  + \widetilde{m}(x) - m(x) \\
 &= O_\Pb\left( n^{-\left(\frac{1}{2+d/\alpha} + \frac{1}{2 + d/\beta}\right)} \right) + O_\Pb\left( n^{\frac{-1}{2+d/\gamma}} \right)
\end{align*}
where the second equality follows from Proposition \ref{prop:smoothbias} together with Assumptions 1--3, and the third since $\widehat\E_n$ is minimax optimal and the CATE is $\gamma$-smooth. For the oracle efficiency condition, note that 
\begin{align*}
&n^{-\left(\frac{1}{2+d/\alpha} + \frac{1}{2 + d/\beta}\right)} \leq n^{-\frac{1}{2+d/\gamma}} \\
&\iff \frac{1}{2+d/\alpha} + \frac{1}{2 + d/\beta} \geq \frac{1}{2+d/\gamma} \\
&\iff 4 + d/\alpha + d/\beta \geq \frac{4 + 2d/\alpha + 2d/\beta + d^2 /\alpha\beta}{2+d/\gamma} \\
&\iff 4 + (4 + d/\alpha + d/\beta)d/\gamma \geq d^2/\alpha\beta \\
&\iff \alpha\beta \geq \frac{d^2}{4 + (4 + d/\alpha + d/\beta)d/\gamma } = \frac{d^2/4}{1 + (1 + d/4\alpha + d/4\beta)d/\gamma} 
\end{align*}
which yields the result. \\

\subsection{Proof of Theorem \ref{thm:lprlearner}}

To simplify notation, in this subsection we largely omit function arguments, for example writing $\widehat\tau=\widehat\tau(x)$, $b=b(X-x)$, $K_{hx}=K_{hx}(X)$, etc. We also define $b_0 = b(0)$ and 
\begin{align*}
\widehat\phi &= \widehat\phi(Z) = (A-\widehat\pi_a) (Y-\widehat\eta)  & 
\phi &= \phi(Z)=(A-\pi)  (Y-\eta)  \\
\widehat\varphi_a &= \widehat\varphi_a(Z) = (A-\widehat\pi_a) (A-\widehat\pi_b) &
\nu &= \nu(X)= \pi(1-\pi) . 
\end{align*}
We let $X^n$ denote all the covariates across the training and test samples $(D_{1a}^n, D_{1b}^n, D_2^n)$, and we let $D_1^n=(D_{1a}^n,D_{1b}^n)$ denote the training data. \\

First note that by definition we have $\widehat\tau = b_0^\T  \Pn(b K_{hx} \widehat\varphi_a b^\T)^{-1} \Pn(b K_{hx} \widehat\phi) $. 
Thus we begin with the central decomposition
\begin{align}
\widehat\tau - \tau &= b_0^\T \Pn(b K_{hx} \nu b^\T)^{-1} \Pn(b K_{hx} \phi) - \tau  \nonumber \\
& \hspace{.5in} + b_0^\T  \Pn(b K_{hx} \nu b^\T)^{-1} \Pn\{b K_{hx} (\widehat\phi - \phi) \} \nonumber \\
& \hspace{.5in} + b_0^\T \left\{ \Pn(b K_{hx} \widehat\varphi_a b^\T)^{-1} - \Pn(b K_{hx} \nu b^\T)^{-1}  \right\} \Pn(b K_{hx} \widehat\phi) \nonumber \\
&\equiv b_0^\T \widetilde{Q}_{hx}^{-1} \Pn(b K_{hx} \phi) - \tau  \label{eq:lpr1} \\
& \hspace{.5in} + b_0^\T  \widetilde{Q}_{hx}^{-1} \Pn\{b K_{hx} (\widehat\phi - \phi) \} \label{eq:lpr2} \\
& \hspace{.5in} + b_0^\T \left( \widehat{Q}_{hx}^{-1} - \widetilde{Q}_{hx}^{-1}  \right) \Pn(b K_{hx} \widehat\phi) \label{eq:lpr3}
\end{align}
where we define
\begin{align}
\widetilde{Q}_{hx} &\equiv \Pn\Big\{ b(X-x) K_{hx}(X) \nu(X) b(X-x)^\T\Big\} \label{eq:qtildehat} \\
\widehat{Q}_{hx} &\equiv \Pn\Big\{ b(X-x) K_{hx}(X) \widehat\varphi_a(Z) b(X-x)^\T\Big\} . \nonumber
\end{align}

\bigskip

The general approach in this proof is to use conditional error bounds for each term in the decomposition above, from which one arrives at bounds in probability (cf.\ Lemma 6.1 of \citet{chernozhukov2018double}). \\

The term on the right-hand side of \eqref{eq:lpr1} is the difference between $\tau$ and an oracle version of the lp-R-Learner that has access to the true nuisance functions and so is built from $\nu$ and $\phi$; we will show that it attains the same order as the oracle rate $n^{-\gamma/(2\gamma+d)}$. The term \eqref{eq:lpr2} captures the error from estimating $(\widehat\pi_b,\widehat\eta)$ in $\widehat\phi$; we will show its order is the product of the biases from estimating $(\widehat\pi_b,\widehat\eta)$ plus a typically smaller variance term. The term \eqref{eq:lpr3} captures the error from estimating $(\widehat\pi_a,\widehat\pi_b)$ in $\widehat\phi_a$; we will show it behaves similarly to \eqref{eq:lpr2}, except involving the product of the biases of $(\widehat\pi_a,\widehat\pi_b)$. In regimes where the oracle rate is not achievable and the propensity score $\pi$ is smoother than the regression function $\mu$, we will show that the term \eqref{eq:lpr2} dominates. \\

\subsubsection{Term \eqref{eq:lpr1}}

The analysis of the term in \eqref{eq:lpr1} follows that of a standard local polynomial estimator of pseudo-outcome $\phi/\nu$ on $X$ with a special choice of kernel. This is because we can write 
\begin{align*}
b_0^\T \Pn(b K_{hx} \nu b^\T)^{-1} \Pn(b K_{hx} \phi) &= \sum_{i=1}^n w_i(x;X^n) \frac{\phi(Z_i)}{\nu(X_i)}
\end{align*}
where the weights are
\begin{align}
w_i(x;X^n) &\equiv \frac{1}{n} b(0)^\T \widetilde{Q}_{hx}^{-1} b(X_i-x) K_{hx}(X_i) \nu(X_i) .  \label{eq:step2wts} 
\end{align}
Thus the oracle estimator in \eqref{eq:lpr1} is a local polynomial estimator of the regression of $\phi/\nu$ on $X$ using scaled kernel function $K_{hx} \nu$, which has the same support as $K_{hx}$ and has a smaller upper bound since $\nu \leq 1/4$. \\

The above implies that the weights $w_i$ satisfy analogues of Proposition 1.12 and Lemma 1.3 in \citet{tsybakov2009introduction}, i.e., that they reproduce polynomials in $X$ up to degree $\lfloor\gamma\rfloor$, and have the localizing properties given in the following lemma. \\

\begin{lemma} \label{lem:lprwts}
Assume (i) $|K(u) | \leq K_{max} \one(\|u\| \leq 1)$ and (ii) $\sup_x \| b(x) \| \leq B$, and define
$$ \lambda_n \equiv \left\| \widetilde{Q}_{hx}^{-1} \right\| \ \text{ and } \ \xi_n \equiv \Pn\Big(\|X - x \| \leq h \Big) / h^d . $$
Then the weights in \eqref{eq:step2wts} satisfy:
\begin{enumerate}
\item $\max_{i} | w_i(x;X^n)| \lesssim \lambda_n / nh^d $. 
\item $\sum_{i=1}^n |w_i(x;X^n| \lesssim  \lambda_n \xi_n  $.
\item $w_i(x;X^n) = 0$ when $\|X_i - x \| > h$.
\end{enumerate}
\end{lemma}

\begin{proof}
Property (3) follows from Assumption (i). 
Properties (1) and (2) also follow immediately after noting that
\begin{align*}
|w_i(x;X^n)| \leq \frac{K_{max} B}{4nh^d} \left\| \widetilde{Q}_{hx}^{-1} \right\|  \one\Big(\|X_i - x \| \leq h \Big)
\end{align*}
using the submultiplicative property of the operator norm, with Assumptions (i)--(ii) and the facts that $\|b(0)\|=1$ and $\nu =\pi(1-\pi) \leq 1/4$. 
\end{proof}

\bigskip

Note also that $\E(\phi/\nu \mid X)=\tau$ since
\begin{align*}
\E\{\phi(Z)  \mid X\} &= \E\Big[ \{A-\pi(X) \}\{Y - \eta(X) \}  \Bigm| X \Big] = \pi(X) \{ 1- \pi(X)\} \tau(X)
\end{align*}
by iterated expectation.  Therefore by the same logic as in Proposition 1.13 of \citet{tsybakov2009introduction}, using Lemma \ref{lem:lprwts} with the H\"{o}lder condition on $\tau$, we have
\begin{align*} 
\E\Big\{ b_0^\T &\Pn(b K_{hx} \nu b^\T)^{-1} \Pn(b K_{hx} \phi) - \tau  \Bigm| X^n \Big\} = \sum_{i=1}^n w_i(x;X^n) \Big\{ \tau(X_i) - \tau(x) \Big\} \\
&= \sum_{i=1}^n w_i(x;X^n) \left[ \Big\{ \tau_{\gamma,x}(X_i) - \tau(x) \Big\} + \Big\{ \tau_{\gamma,x}(X_i) - \tau(X_i)  \Big\} \right] \\
&\lesssim  \sum_{i=1}^n | w_i(x;X^n) | \| X_i - x \|^\gamma \one(\| X_i - x \| \leq h) \lesssim h^\gamma  \lambda_n  \xi_n
\end{align*}
where in the second line $\tau_{\gamma,x}(X_i)=\sum_{|j| \leq \lfloor\gamma\rfloor} \frac{(X_i - x)^j}{j!} D^j\tau(x)$ is the $\lfloor\gamma\rfloor$-order multivariate Taylor approximation of $\tau$ at $x$ evaluated at $X_i$, the third line follows by the polynomial-reproducing property of $w_i$, the H\"{o}lder assumption on $\tau$,  and Property 3 of Lemma \ref{lem:lprwts}, and the last line by Property 2 of Lemma \ref{lem:lprwts}. \\

For the variance we have
\begin{align*} 
\var\Big\{ b_0^\T &\Pn(b K_{hx} \nu b^\T)^{-1}  \Pn(b K_{hx} \phi)  \Bigm| X^n \Big\} = \sum_{i=1}^n w_i(x;X^n)^2 \var(\phi \mid X=X_i) \\
& \lesssim \max_{i} |w_i(x;X^n)| \sum_{i=1}^n |w_i(x;X^n)| \lesssim \frac{\lambda_n^2 \xi_n}{nh^d} 
\end{align*}
since the variance of $\phi$ is bounded, and using Properties 1--2 of Lemma \ref{lem:lprwts}. Therefore term \eqref{eq:lpr1} satisfies
\begin{align}
\E&\left[ \Big\{b_0^\T \widetilde{Q}_{hx}^{-1} \Pn(b K_{hx} \phi) - \tau \Big\}^2 \Bigm| X^n \right] \lesssim \ \left( h^{2\gamma}  \xi_n + \frac{1}{nh^d} \right) \lambda_n^2 \xi_n.  \label{eq:lpr1rate_fs}
\end{align}
Since $\xi_n= O_\Pb(1)$ and $\lambda_n=O_\Pb(1)$ by assumption, the above rate will end up matching the classical $n^{-\gamma/(2\gamma+d}$ rate, when balancing bias and variance by taking $h \sim n^{-1/(2\gamma+d)}$. 

\bigskip

\subsubsection{Term \eqref{eq:lpr2}}

Now we bound the conditional mean and variance of term \eqref{eq:lpr2}.  First note we have
\begin{align}
\E(\widehat\phi - \phi \mid D^n,X^n) &= \E\Big\{ (A - \widehat\pi_a) (Y - \widehat\eta) - (A-\pi) (Y - \eta) \mid D^n,X^n \Big\} \nonumber \\
&=  \E\Big\{ (A - \pi + \pi - \widehat\pi_a) (Y - \eta + \eta - \widehat\eta) - (A-\pi) (Y - \eta) \mid D^n,X^n \Big\} \nonumber \\
&= \{\widehat\pi_a(X_i)-\pi(X_i) \} \{\widehat\eta(X_i) - \eta(X_i)\} \equiv \widehat{R}_2(X_i) \label{eq:rlr2}
\end{align}
by iterated expectation. Let
\begin{align*}
\mathcal{B}_n(x;\widehat{f}) = \E\{\widehat{f}(x) \mid X^n\} - f(x) 
\end{align*}
denote the pointwise conditional bias of a generic estimator $\widehat{f}$ of $f$ at $x$.  

Then the conditional mean of term \eqref{eq:lpr2} is
\begin{align}
\E\Big[ b_0^\T  \Pn(b K_{hx} \nu b^\T)^{-1} &\Pn\{b K_{hx} (\widehat\phi - \phi) \} \mid X^n \Big] =  \sum_{i=1}^n w_i(x;X^n) \nu(X_i)^{-1}  \E(\widehat\phi_i - \phi_i \mid X^n)  \nonumber \\
&= \sum_{i=1}^n w_i(x;X^n) \nu(X_i)^{-1} \E\{\widehat{R}_2(X_i) \mid X^n\}  \nonumber \\
&= \sum_{i=1}^n w_i(x;X^n) \nu(X_i)^{-1} \ \mathcal{B}_n(X_i;\widehat\pi_a)  \mathcal{B}_n(X_i;\widehat\eta)  \nonumber \\
&\lesssim \sum_{i=1}^n \frac{\lambda_n}{nh^d} \left| \mathcal{B}_n(X_i;\widehat\pi_a)  \mathcal{B}_n(X_i;\widehat\eta) \right| \one(\| X_i - x \| \leq h) \nonumber \\
& \lesssim \sup_{\|x'-x|| \leq h} \left| \mathcal{B}_n(x';\widehat\pi_a) \right| \left| \mathcal{B}_n(x';\widehat\eta) \right| \lambda_n \xi_n  \lesssim \ k^{-\alpha/d} k^{-\beta/d} \Big( \lambda_n \xi_n  \Big) \label{eq:lpr2mean}
\end{align}
where the second line follows by iterated expectation, the third since $\widehat\pi_a \ind \widehat\eta$,  the fourth and fifth by Properties 1--3 of Lemma \ref{lem:lprwts} and since $\nu \geq \epsilon(1-\epsilon)$, and the last by Condition \ref{eq:nuisbias} via Assumption \ref{ass:nuiserr}. For the conditional variance we have the decomposition
\begin{align}
\var&\Big[ b_0^\T  \Pn(b K_{hx}  \nu b^\T)^{-1} \Pn\{b K_{hx} (\widehat\phi - \phi) \} \mid X^n \Big] = \var\left[ \sum_{i=1}^n w_i(x;X^n) \left\{ \frac{\widehat\phi(Z_i) - \phi(Z_i)}{\nu(X_i)} \right\} \Bigm| X^n \right] \nonumber \\
&=  \E\left(\var\left[ \sum_{i=1}^n w_i(x;X^n) \left\{ \frac{\widehat\phi(Z_i) - \phi(Z_i)}{\nu(X_i)} \right\} \Bigm| D^n, X^n \right] \Bigm| X^n \right) \label{eq:lpr2v1} \\
& \hspace{.5in} +  \var\left(\E\left[ \sum_{i=1}^n w_i(x;X^n) \left\{ \frac{\widehat\phi(Z_i) - \phi(Z_i)}{\nu(X_i)} \right\} \Bigm| D^n, X^n \right] \Bigm| X^n \right) \label{eq:lpr2v2}
\end{align}
For the term in  \eqref{eq:lpr2v1} note that
\begin{align*}
\var\bigg[ \sum_{i=1}^n \bigg\{ \frac{w_i(x;X^n) }{\nu(X_i)} \bigg\} & \left(\widehat\phi_i - \phi_i \right) \Bigm| D^n, X^n \bigg] =  \sum_{i=1}^n \left\{ \frac{w_i(x;X^n)}{v(X_i)} \right\}^2 \var\left(\widehat\phi_i - \phi_i \mid D^n, X^n \right)  \\
& \lesssim \frac{1}{\epsilon(1-\epsilon)}  \max_{i} | w_i(x;X^n) | \sum_{i=1}^n | w_i(x;X^n)|   \lesssim \frac{\lambda_n^2 \xi_n}{nh^d} 
\end{align*}
where the second line used that $\var(\widehat\phi - \phi \mid D^n,X^n)$ and $1/\nu$ are bounded, and the last Properties 1--2 of Lemma \ref{lem:lprwts}. \\

The second term \eqref{eq:lpr2v2} equals
\begin{align}
\var&\left[ \sum_{i=1}^n \left\{ \frac{w_i(x; X^n)}{\nu(X_i)} \right\} \widehat{R}_2(X_i) \Bigm| X^n \right] = \sum_{i=1}^n  \left\{ \frac{w_i(x; X^n)}{\nu(X_i)} \right\}^2 \var\left\{ \widehat{R}_2(X_i) \mid X^n \right\} \label{eq:catevar1} \\
& \hspace{.5in} + \sum_{i \neq j} \left\{ \frac{w_i(x; X^n)w_j(x; X^n)}{\nu(X_i) \nu(X_j)} \right\} \cov\left\{ \widehat{R}_2(X_i) , \widehat{R}_2(X_j) \mid X^n \right\} \label{eq:catevar2}
\end{align}
For the first term \eqref{eq:catevar1} above we have
\begin{align}
\var&\left\{ \widehat{R}_2(X_i) \mid X^n \right\} = \var\Big[ \{ \widehat\pi(X_i) - \pi(X_i)\} \{ \widehat\eta(X_i) - \eta(X_i)\} \mid X^n \Big] \nonumber \\
&= \var\{ \widehat\pi(X_i) \mid X^n\} \var\{ \widehat\eta(X_i) \mid X^n\} +  \var\{ \widehat\pi(X_i) \mid X^n\} \Big[ \E\{ \widehat\eta(X_i) - \eta(X_i) \mid X^n\}\Big]^2  \nonumber\\
& \hspace{.5in} + \var\{ \widehat\eta(X_i) \mid X^n\} \Big[ \E\{ \widehat\pi(X_i) - \pi(X_i) \mid X^n\}\Big]^2 \nonumber \\
&\lesssim \left( \frac{k}{n} \right)^2 + \frac{k}{n} \left( k^{-2\alpha/d} + k^{-2\beta/d} \right)
\end{align}
where the second line uses the fact that $\var(V_1 V_2) = \var(V_1) \var(V_2) + \var(V_1) \E(V_2)^2 + \var(V_2) \E(V_1)^2$ if $V_1 \ind V_2$, and the third the bias and variance from Conditions 1b-1c. (A slightly worse  bound could have used $\var(V_1V_2) \leq \E(V_1^2) \E(V_2^2)$ in the second line, which would give the expression in the third line plus a $k^{-2(\alpha+\beta)/d}$ term, which is smaller order in the undersmoothing regime.) Therefore when $\max(k^{-2\alpha/d}, k^{-2\beta/d}) = k^{-2(\alpha \wedge \beta)/d} \lesssim k/n$, the first term in  \eqref{eq:catevar1} satisfies
$$ \sum_{i=1}^n  \left\{ \frac{w_i(x; X^n)}{\nu(X_i)} \right\}^2 \var\left\{ \widehat{R}_2(X_i) \mid X^n \right\} \lesssim \frac{\lambda_n^2 \xi_n}{nh^d} \left( \frac{k}{n} \right)^2 $$
using Properties 1--2 of Lemma \ref{lem:lprwts}. \\

For the second term in  \eqref{eq:catevar2} we have
\begin{align}
\cov&\left\{ \widehat{R}_2(X_i) , \widehat{R}_2(X_j) \mid X^n \right\} \nonumber \\
& = \cov\{ \widehat\pi(X_i) , \widehat\pi(X_j) \mid X^n\} \E\Big[ \{ \widehat\eta(X_i)-\eta(X_i)\}\{\widehat\eta(X_j) - \eta(X_j) \} \mid X^n \Big] \nonumber \\
& \hspace{.5in} +  \cov\{ \widehat\eta(X_i) , \widehat\eta(X_j) \mid X^n\} \E\Big[ \{ \widehat\pi(X_i)-\eta(X_i)\}\{\widehat\pi(X_j) - \eta(X_j) \} \mid X^n \Big] \nonumber \\
& \hspace{.5in} + \cov\{ \widehat\pi(X_i) , \widehat\pi(X_j) \mid X^n\} \cov\{ \widehat\eta(X_i) , \widehat\eta(X_j) \mid X^n\} \nonumber \\
&\lesssim \Big| \cov\{ \widehat\pi(X_i) , \widehat\pi(X_j) \mid X^n\} \Big| \left( k^{-2\beta/d} + \frac{k}{n} \right) + \Big| \cov\{ \widehat\eta(X_i) , \widehat\eta(X_j) \mid X^n\} \Big| \left( k^{-2\alpha/d} + \frac{k}{n} \right) \nonumber \\
&\hspace{.5in} + \Big| \cov\{ \widehat\pi(X_i) , \widehat\pi(X_j) \mid X^n\} \cov\{ \widehat\eta(X_i) , \widehat\eta(X_j) \mid X^n\} \Big| \label{eq:covnuis1}
\end{align}
where the second line uses the fact that $\cov(A_i B_i, A_j B_j) = \cov(A_i,A_j) \E(B_i B_j) + \cov(B_i,B_j) \E(A_i A_j) + \cov(A_i,A_j) \cov(B_i,B_j)$ whenever $(A_i,A_j) \ind (B_i,B_j)$, and the third Cauchy-Schwarz. Now $\cov\{ \widehat\pi(X_i) , \widehat\pi(X_j) \mid X^n\} $ equals
\begin{align}
& \cov\left\{ \sum_{\ell=1}^n w_{\ell\alpha}(X_i; X^n) A_\ell ,  \sum_{\ell=1}^nw_{\ell\alpha}(X_j; X^n) A_\ell  \mid X^n \right\} \one\left(\|X_i - X_j \| \leq 2k^{-1/d}\right) \nonumber \\
& \leq \sqrt{ \var\{ \widehat\pi(X_i) \mid X^n\} }\sqrt{ \var\{ \widehat\pi(X_j) \mid X^n\}} \one\left(\|X_i - X_j \| \leq 2k^{-1/d}\right) \nonumber\\
&\lesssim \left( \frac{k}{n} \right) \one\left(\|X_i - X_j \| \leq 2k^{-1/d}\right) \label{eq:covnuis2}
\end{align}
where the first line uses Property 3 of Lemma \ref{lem:lprwts}, which implies $\widehat\pi(X_i)$ and $\widehat\pi(X_j)$ are built from different observations if $X_i$ and $X_j$ are far enough apart, the second Cauchy-Schwarz, and the 
third Condition \ref{eq:nuiswts}. 
Therefore when $k^{-2(\alpha \wedge \beta)/d} \lesssim k/n$, and defining
\begin{align*}
\omega_{ni} & \equiv \frac{k}{n-1} \sum_{j \neq i} \one\left(\|X_i - X_j \| \leq 2k^{-1/d}\right)  \\
\xi'_n &\equiv \frac{1}{nh^d} \sum_{i=1}^n \one(\|X_i-x\| \leq h) \omega_{ni}
\end{align*}
the second term in \eqref{eq:catevar2} satisifies
\begin{align*}
 \sum_{i \neq j} & \left\{ \frac{w_i(x; X^n)w_j(x; X^n)}{\nu(X_i) \nu(X_j)} \right\} \cov\left\{ \widehat{R}_2(X_i) , \widehat{R}_2(X_j) \mid X^n \right\} \\
 &\lesssim \left( \frac{k}{n}\right)^2  \sum_{i \neq j} \left\{ \frac{ |w_i(x; X^n)w_j(x; X^n)|}{\nu(X_i) \nu(X_j)} \right\} \one\left(\|X_i - X_j \| \leq 2k^{-1/d}\right) \\
 &\lesssim \left( \frac{\lambda_n}{nh^d}\right)^2  \left( \frac{k}{n}\right)^2 \sum_{i \neq j} \one(\|X_i-x\| \leq h) \one\left(\|X_i - X_j \| \leq 2k^{-1/d}\right)  =  \xi_n' \left( \frac{\lambda_n^2}{nh^d}\right)  \left( \frac{k}{n}\right) 
\end{align*}
where the first inequality follows from \eqref{eq:covnuis1} and \eqref{eq:covnuis2}, the second by Properties 1 and 3 of  Lemma \ref{lem:lprwts}, and the last by definition. \\

Therefore combining bounds on the four terms \eqref{eq:lpr2mean},  \eqref{eq:lpr2v1},  \eqref{eq:catevar1}, and \eqref{eq:catevar2}, we have that the term \eqref{eq:lpr2} satisfies
\begin{align}
\E\left( \Big[ b_0^\T  \widetilde{Q}_{hx}^{-1} \Pn\{b K_{hx} (\widehat\phi - \phi) \}  \Big]^2 \Bigm| X^n \right)  &\lesssim k^{-4s/d} (\lambda_n \xi_n)^2  + \frac{\lambda_n^2}{nh^d} \left(  \xi_n +  \left( \frac{k}{n} \right)  \left( \xi'_n  + \xi_n \frac{ k}{n} \right) \right)
 \label{eq:lpr2rate_fs}
\end{align}

\bigskip

\subsubsection{Term \eqref{eq:lpr3}}

Note term \eqref{eq:lpr3} equals
\begin{align}
b_0^\T \Big\{ \Pn(b K_{hx} \widehat\varphi_a b^\T)^{-1} &- \Pn(b K_{hx} \nu b^\T)^{-1}  \Big\} \Pn(b K_{hx} \widehat\phi) = b_0^\T \left( \widehat{Q}_{hx}^{-1} - \widetilde{Q}_{hx}^{-1} \right) \Pn(bK_{hx} \widehat\phi) \nonumber \\
&= \left\{ b_0^\T \widetilde{Q}_{hx}^{-1} \left( \widetilde{Q}_{hx} - \widehat{Q}_{hx} \right) \right\}  \left\{ \widehat{Q}_{hx}^{-1}   \Pn(bK_{hx} \widehat\phi)  \right\} \label{eq:lpr3decomp}
\end{align}
For the first term in the product in \eqref{eq:lpr3decomp} we have
\begin{align*}
b_0^\T \widetilde{Q}_{hx}^{-1} \left( \widehat{Q}_{hx} - \widetilde{Q}_{hx} \right) &= \sum_{i=1}^n \left\{ \frac{w_i(x;X^n)}{\nu(X_i)} \right\}  \Big\{ \widehat\varphi_a(Z_i)-\nu(X_i) \Big\} b(X_i-x)^\T 
\end{align*}
which is a finite-dimensional vector with elements
\begin{align*}
 \sum_{i=1}^n  \left\{ \frac{b_\ell(X_i-x)}{\nu(X_i)} \right\} w_i(x;X^n) \Big\{ \widehat\varphi_a(Z_i)-\nu(X_i) \Big\} 
\end{align*}
which we can tackle with similar logic as for term \eqref{eq:lpr2}. First note
\begin{align}
\E( \widehat\varphi_a - \nu \mid D^n,X^n) &= \E\Big\{ (A - \widehat\pi_a) (A - \widehat\pi_b)  - (A-\pi)^2 \mid D^n,X^n \Big\} \nonumber \\
&=  \E\Big\{ (A - \pi + \pi - \widehat\pi_a) (A - \pi + \pi - \widehat\pi_b) - (A-\pi)^2\mid D^n,X^n \Big\} \nonumber \\
&= \{\widehat\pi_a(X_i)-\pi(X_i) \} \{\widehat\pi_b(X_i)-\pi(X_i) \} \equiv \widehat{R}_{2\pi}(X_i) \label{eq:rlr2pi}
\end{align}
by iterated expectation. Defining $\mathcal{B}_n(x;\widehat{f})$ as in the previous subsection,  the conditional means of the elements of  \eqref{eq:lpr3decomp} (omitting subscripts $\ell$) are
\begin{align*}
\E\left\{ b_0^\T \widetilde{Q}_{hx}^{-1} \left( \widehat{Q}_{hx} - \widetilde{Q}_{hx} \right) \Bigm| X^n \right\}_\ell &=  \sum_{i=1}^n  \left\{ \frac{b(X_i-x)}{\nu(X_i)} \right\} w_i(x;X^n) \E\Big\{ \widehat\varphi_a(Z_i)-\nu(X_i) \mid X^n \Big\}  \\
&= \sum_{i=1}^n \left\{ \frac{b(X_i-x)}{\nu(X_i)} \right\}w_i(x;X^n)  \E\{\widehat{R}_{2\pi}(X_i) \mid X^n\}  \\
&= \sum_{i=1}^n \left\{ \frac{b(X_i-x)}{\nu(X_i)} \right\} w_i(x;X^n)  \ \mathcal{B}_n(X_i;\widehat\pi_a)  \mathcal{B}_n(X_i;\widehat\pi_b)  \\
&\lesssim \sum_{i=1}^n \frac{\lambda_n}{nh^d} \left| \mathcal{B}_n(X_i;\widehat\pi_a)  \mathcal{B}_n(X_i;\widehat\pi_b) \right| \one(\| X_i - x \| \leq h) \\
& \lesssim \sup_{\|x'-x|| \leq h} \left| \mathcal{B}_n(x';\widehat\pi_a) \right| \left| \mathcal{B}_n(x';\widehat\pi_b) \right|  (\lambda_n \xi_n)  \lesssim \ k^{-2\alpha/d} \Big( \lambda_n \xi_n \Big)
\end{align*}
where the second line follows by iterated expectation, the third since $\widehat\pi_a \ind \widehat\pi_b$,  the fourth and fifth by Properties 1--3 of Lemma \ref{lem:lprwts} and since $\nu \geq \epsilon(1-\epsilon)$, and the last by Condition \ref{eq:nuisbias} via Assumption \ref{ass:nuiserr}. \\

The analysis of the conditional variance follows exactly the same logic as for term \eqref{eq:lpr2}, and is of the same order. For the second term in the product in \eqref{eq:lpr3decomp} we have
\begin{align*}
\left|\left|  \widehat{Q}_{hx}^{-1}   \Pn(bK_{hx} \widehat\phi)  \right|\right| \lesssim \widehat\lambda_n \left|\left|  \Pn(bK_{hx} \widehat\phi)  \right|\right|
\end{align*}
and note
\begin{align*}
\E\{\| \Pn(b K_{hx} \widehat\phi)\|^2 \mid X^n \} &= \sum_j \E\{\Pn(b_j K_{hx} \widehat\phi)^2 \mid X^n\} \\
&= \sum_j \left[ \E\{ \Pn(b_j K_{hx} \widehat\phi) \mid X^n\}^2+ \var\{ \Pn(b_j K_{hx} \widehat\phi) \mid X^n\} \right] .
\end{align*}

Therefore for $b_\ell$ the $\ell$-th component of $b(X_i-x)$
\begin{align*}
\E\Big\{ \Pn(b_\ell K_{hx} \widehat\phi) \mid X^n \Big\} &= \Pn\Big\{ b_\ell K_{hx} \E(\widehat\phi_i \mid X^n) \Big\} \lesssim \frac{1}{nh^d} \sum_{i=1}^n \one(\| X_i - x \| \leq h ) = \xi_n
\end{align*}
using boundedness of the kernel $K$ and observations $Z$. For the variance we have
\begin{align}
\var\Big\{ \Pn(b_\ell K_{hx} \widehat\phi) \mid X^n \Big\} &= \E\left[ \var\Big\{ \Pn(b_\ell K_{hx} \widehat\phi) \mid D^n, X^n \Big\} \Bigm| X^n \right] \label{eq:last1} \\
& \hspace{.5in} +  \var\left[ \E\Big\{ \Pn(b_\ell K_{hx} \widehat\phi) \mid D^n, X^n \Big\} \Bigm| X^n \right] \label{eq:last2}
\end{align}
For the term in \eqref{eq:last1} note
\begin{align*}
 \var\Big\{ \Pn(b_\ell K_{hx} \widehat\phi) \mid D^n, X^n \Big\}  &=  \left( \frac{1}{nh^d} \right)^2 \sum_{i=1}^n b_\ell(X_i-x)^2 K\left(\frac{X_i-x}{h}\right)^2 \var(\widehat\phi_i \mid D^n,X^n) \\
&\lesssim \left( \frac{1}{nh^d} \right)^2 \sum_{i=1}^n \one(\| X_i - x \| \leq h) = \frac{\xi_n}{nh^d}
\end{align*}
using the boundedness of the kernel, $X$, and $\widehat\phi$. For the term in \eqref{eq:last2} 
\begin{align*}
\var&\left[ \E\Big\{ \Pn(b_\ell K_{hx} \widehat\phi) \mid D^n, X^n \Big\} \Bigm| X^n \right] = \var\left[ \Pn\Big\{ b_\ell K_{hx}\E( \widehat\phi \mid D^n, X^n) \Big\} \Bigm| X^n \right] \\
&= \var\left\{ \Pn\Big\{ b_\ell K_{hx}  \widehat{R}_2(X_i)  \Bigm| X^n \right\} \lesssim \frac{ 1}{nh^d} \left(  \xi_n +  \left( \frac{k}{n} \right)  \left( \xi'_n  + \xi_n \frac{ k}{n} \right) \right)
\end{align*}
where the second line follows from the definition of $\widehat{R}_2(X_i)$ and since $\E(\phi \mid X_i)$ is constant given $X^n$.  The rest follows the same logic as for  the term in \eqref{eq:lpr2v2}, noting that $\lambda_n$ terms do not appear since here there is only a kernel weight, and so no corresponding $\| \widetilde{Q}_{hx}^{-1} \|$ term.  \\

Therefore the square of the term \eqref{eq:lpr3} has conditional expectation bounded above by a constant multiple of
\begin{align}
&\left\{ k^{-4\alpha/d} (\lambda_n \xi_n)^2 + \frac{\lambda_n^2 }{nh^d}\left(  \xi_n +  \left( \frac{k}{n} \right)  \left( \xi'_n  + \xi_n \frac{ k}{n} \right) \right) \right\}\nonumber \\
& \hspace{.5in} \times \widehat\lambda_n \left\{ \xi_n^2 + \frac{1}{nh^d}  \left(  \xi_n +  \left( \frac{k}{n} \right)  \left( \xi'_n  + \xi_n \frac{ k}{n} \right) \right) \right\}  \label{eq:lpr3rate_fs} .
\end{align}

\bigskip

\subsubsection{Combining Bounds}

Now we use the following lemma (or equivalently Lemma 6.1 of \citet{chernozhukov2018double}) to deduce unconditional convergence from the previously derived conditional bounds. \\

\begin{lemma} \label{lem:condbd}
Suppose a random variable $Z_n$ satisfies
$$ | \E(Z_n \mid X^n) | \lesssim b_n \ \text{ and } \ \var(Z_n \mid X^n) \lesssim s_n^2  $$
for some $b_n=b(X^n)$ and $s_n^2=s(X_n)^2$. 
Then 
$Z_n = O_\Pb(b_n + s_n). $
\end{lemma}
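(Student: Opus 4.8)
The plan is to decompose $Z_n$ into its conditional mean plus a mean-zero fluctuation, control the fluctuation by a conditional Chebyshev inequality, and then integrate out the conditioning variables $X^n$ to pass from conditional tail bounds to an unconditional statement. Throughout I interpret the hypotheses as almost-sure bounds $|\E(Z_n \mid X^n)| \le C_1 b_n$ and $\var(Z_n \mid X^n) \le C_2 s_n^2$ holding with universal constants $C_1, C_2$, which is exactly the form in which the conditional bounds \eqref{eq:lpr1rate_fs}, \eqref{eq:lpr2rate_fs}, and \eqref{eq:lpr3rate_fs} are established in the preceding subsections.

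First I would write $Z_n = \{Z_n - \E(Z_n \mid X^n)\} + \E(Z_n \mid X^n)$. The second summand is deterministically (almost surely) bounded by $C_1 b_n$, so it contributes an $O(b_n)$ term carrying no probabilistic slack. For the first summand, conditional on $X^n$ it has mean zero and variance at most $C_2 s_n^2$, so the conditional Chebyshev inequality gives, for any $t > 0$,
$$ \Pb\big( |Z_n - \E(Z_n \mid X^n)| > t \mid X^n \big) \le \frac{\var(Z_n \mid X^n)}{t^2} \le \frac{C_2 s_n^2}{t^2} \quad \text{a.s.} $$

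Next I would integrate this bound over $X^n$ via the tower property $\Pb(A) = \E\{\Pb(A \mid X^n)\}$: since the final right-hand side above is a deterministic constant not depending on $X^n$, taking expectations and setting $t = M s_n$ yields the unconditional bound $\Pb(|Z_n - \E(Z_n \mid X^n)| > M s_n) \le C_2/M^2$. Combining this with the almost-sure bound on the conditional mean through the triangle inequality gives
$$ \Pb\big( |Z_n| > M s_n + C_1 b_n \big) \le \frac{C_2}{M^2}, $$
so that $|Z_n| \le (M + C_1)(b_n + s_n)$ with probability at least $1 - C_2/M^2$.

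Finally, to conclude $Z_n = O_\Pb(b_n + s_n)$ I would verify the definition directly: given any $\epsilon > 0$, choose $M$ large enough that $C_2/M^2 < \epsilon$; then $(M + C_1)$ serves as the constant in the $O_\Pb$ bound, valid for all $n$. There is no genuine obstacle in the argument; the only points requiring care are ensuring that $C_1, C_2$ are uniform in $n$ so the same $M$ works across all $n$, and reading the hypotheses as almost-sure conditional bounds so that both the conditional Chebyshev step and the subsequent integration over $X^n$ are justified.
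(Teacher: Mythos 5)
Your proof is correct. It takes a route that is close in spirit to, but genuinely distinct from, the paper's: you first decompose $Z_n = \{Z_n - \E(Z_n \mid X^n)\} + \E(Z_n \mid X^n)$, absorb the conditional mean via the almost-sure bound $|\E(Z_n \mid X^n)| \lesssim b_n$, and apply a \emph{conditional} Chebyshev inequality to the mean-zero fluctuation before integrating out $X^n$ by the tower property. The paper instead computes the \emph{unconditional} first and second moments of $Z_n$ itself -- using iterated expectation to get $|\E(Z_n)| \lesssim b_n$ and the law of total variance to get $\var(Z_n) \lesssim s_n^2 + b_n^2$ -- and then applies a single unconditional Chebyshev bound to $Z_n/(b_n+s_n)$ via $\E(Z_n^2) \lesssim b_n^2 + s_n^2$. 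Your version dispenses with the law-of-total-variance computation and treats the conditional-mean contribution deterministically rather than in probability, which is marginally sharper and arguably cleaner; the paper's version requires one application of Chebyshev rather than a decomposition plus triangle inequality, and more directly exhibits the explicit constant $M_\epsilon = \sqrt{C/\epsilon}$. Both arguments hinge on the same two points you correctly flag: the constants implicit in $\lesssim$ must be uniform in $n$, and the hypotheses must be read as almost-sure conditional bounds, which is indeed the form in which \eqref{eq:lpr1rate_fs}--\eqref{eq:lpr3rate_fs} are established.
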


\begin{proof}
Note for some $M_\epsilon$ depending on any $\epsilon >0$ we have
\begin{align*}
\Pb\left( \frac{|Z_n|}{b_n+s_n} \geq M_\epsilon \right) &= \E\left\{ \Pb\left( \frac{|Z_n|}{b_n+s_n} \geq M_\epsilon \Bigm| X^n \right) \right\} \\
&\leq \E\left\{  \frac{ \E(Z_n^2 \mid X^n) }{(b_n+s_n)^2 M_\epsilon^2} \right\} \\
&\leq  \E\left\{ \frac{ \E(Z_n \mid X^n)^2 +\var(Z_n \mid X^n)  }{(b_n^2+s_n^2) M_\epsilon^2} \right\} \leq \frac{C}{M_\epsilon^2} 
\end{align*}
where the first equality uses iterated expectation, the second Markov's inequality, the third the fact that $(b_n+s_n)^2 \geq b_n^2 + s_n^2$, and the last the bounds on $|\E(Z_n \mid X^n)|$ and $\var(Z_n \mid X^n)$. The result follows taking $M_\epsilon=\sqrt{C/\epsilon}$. 
\end{proof}

\bigskip

Recall our conditional bounds  in \eqref{eq:lpr1rate_fs}, \eqref{eq:lpr2rate_fs}, and \eqref{eq:lpr3rate_fs} involve the quantities $(\lambda_n, \widehat\lambda_n, \xi_n, \xi_n')$. The quantities $\lambda_n$ and $\widehat\lambda_n$ are $O_\Pb(1)$ by Assumption \ref{ass:eigen}, and in the following lemmas we show that $\xi_n$ and $\xi_n'$ are as well, as long as the covariate density is bounded (Assumption \ref{ass:zestbd}). \\

\begin{lemma}
Assume $X$ has a density bounded above by some $C<\infty$. Then 
$$ \xi_n \equiv \Pn\Big( \| X-x \| \leq h \Big)/h^d = O_\Pb(1) . $$
\end{lemma}

\begin{proof}
First note that
\begin{align*}
\E(\xi_n) = \frac{1}{h^d} \int \one\Big( \| t-x \| \leq h \Big) \ p(t) \ dt \leq \frac{C}{h^d} \frac{\pi^{d/2} h^d}{\Gamma(1+d/2)} \leq 5.3 C
\end{align*}
using the bound on the density and the volume of a $d$-ball of radius $h$. Therefore for any $\epsilon > 0$ we have
\begin{align*}
\Pb( \xi_n \geq 5.3 C/\epsilon ) \leq \epsilon
\end{align*}
by Markov's inequality, which yields the result.
\end{proof}

\bigskip

\begin{lemma}
Assume $X$ has a density bounded above by some $C<\infty$.  Define
$$ \omega_{ni} \equiv  \frac{k}{n-1}  \sum_{j \neq i} \one\Big( \| X_i - X_j \| \leq 2/k^{1/d} \Big) . $$
Then
$$ \xi_n' \equiv \frac{1}{nh^d} \sum_{i=1}^n \one\Big( \| X_i-x \| \leq h \Big) \omega_{ni} = O_\Pb(1) $$
\end{lemma}

\begin{proof}
For any $i$ we have
\begin{align*}
\E(\omega_{ni} \mid X_i) &= k \int \one\Big(\|X_i-t \| \leq 2/k^{1/d} \Big) \ p(t) \ dt \leq 2^d 5.3 C
\end{align*}
just as in Lemma 3.  Therefore
\begin{align*}
\E\left\{ \frac{1}{nh^d} \sum_{i=1}^n \one\Big( \| X_i - x \| \leq h \Big) \omega_{ni} \right\} &= \E\left\{ \frac{1}{nh^d} \sum_{i=1}^n \one\Big( \| X_i - x \| \leq h \Big) \E( \omega_{ni} \mid X_i) \right\}  \\
&\leq 2^d 5.3 C \E(\xi_n) \leq 2^d (5.3 C)^2
\end{align*}
by iterated expectation and Lemma 3. Thus the result follows by Markov's inequality.  
\end{proof}

\bigskip

Therefore, combining the bounds  in \eqref{eq:lpr1rate_fs}, \eqref{eq:lpr2rate_fs}, and \eqref{eq:lpr3rate_fs}, together with the facts that $(\lambda_n, \widehat\lambda_n, \xi_n, \xi_n')$ are all $O_\Pb(1)$ due to Assumptions \ref{ass:zestbd} and \ref{ass:eigen}, we have the unconditional convergence results
\begin{align*} 
 b_0^\T \widehat{Q}_{hx}^{-1} \Pn(b K_{hx} \phi) - \tau  &= O_\Pb \left( h^\gamma + \frac{1}{\sqrt{nh^d}} \right) \\
b_0^\T  \Pn(b K_{hx} \nu b^\T)^{-1} \Pn\{b K_{hx} (\widehat\phi - \phi) \} &= O_\Pb \left( k^{-2s/d} + \frac{1}{\sqrt{nh^d}} \left( 1 + \sqrt{\frac{k}{n}} + \frac{k}{n} \right) \right)  \\
b_0^\T \widetilde{Q}_{hx}^{-1} \left( \widetilde{Q}_{hx} - \widehat{Q}_{hx} \right)   \widehat{Q}_{hx}^{-1}   \Pn(bK_{hx} \widehat\phi) &= O_\Pb  \left( k^{-2\alpha/d}  + \frac{1}{\sqrt{nh^d}} \left( 1 + \sqrt{\frac{k}{n}}+ \frac{k}{n} \right) \right)  .
\end{align*}

Note we can discard the $\sqrt{k/n}$ terms since, if $k \leq n$ then the constant term 1 dominates the variance multiplier, whereas if $k \geq n$ then $k/n \geq \sqrt{k/n}$ and the $k/n$ term dominates. 

\bigskip

\section{R Code} \label{sec:appendix}

Piecewise polynomial model from motivating example in Section \ref{sec:motiv}:

\begin{verbatim}
set.seed(1234)
expit <- function(x){ exp(x)/(1+exp(x)) }; logit <- function(x){ log(x/(1-x)) }
n <- 4*2000; nsim <- 500; rateseq <- seq(0.1,0.5,by=0.05); res2 <- NULL

for (rate in rateseq){

  res <- data.frame(matrix(nrow=nsim,ncol=4))
  colnames(res) <- c("plugin","xl","drl","oracle.drl")

for (i in 1:nsim){
  
## simulate data
s <- sort(rep(1:4,n/4)); x <- (runif(n,-1,1)); ps <- 0.1 + 0.8*(x>0)
mu0 <- (x <= -.5)*0.5*(x+2)^2 + (x/2+0.875)*(x>-1/2 & x<0) + 
  (x>0 & x<.5)*(-5*(x-0.2)^2 +1.075) + (x>.5)*(x+0.125); mu1 <- mu0; tau <- 0
a <- rbinom(n,1,ps); y <- a*mu1 + (1-a)*mu0 + rnorm(n,sd=(.2-.1*cos(2*pi*x)))

## estimate nuisance functions
pihat <- expit( logit(ps) + rnorm(n,mean=1/(n/4)^rate,sd=1/(n/4)^rate))
mu1hat <- predict(smooth.spline(x[a==1 & s==2],y[a==1 & s==2]),x)$y
mu0hat <- predict(smooth.spline(x[a==0 & s==2],y[a==0 & s==2]),x)$y

## construct estimators
plugin <- mu1hat-mu0hat
x1 <- predict(smooth.spline(x[a==1 & s==3],(y-mu0hat)[a==1 & s==3]),x)$y
x0 <- predict(smooth.spline(x[a==0 & s==3],(mu1hat-y)[a==0 & s==3]),x)$y
xl <- pihat*x0 + (1-pihat)*x1

pseudo <- ((a-pihat)/(pihat*(1-pihat)))*(y-a*mu1hat-(1-a)*mu0hat) + mu1hat-mu0hat
drl <- predict(smooth.spline(x[s==3],pseudo[s==3]),x)$y
pseudo.or <- ((a-ps)/(ps*(1-ps)))*(y-a*mu1-(1-a)*mu0) + mu1-mu0
oracle.drl <- predict(smooth.spline(x[s==3],pseudo.or[s==3]),x)$y

## save MSEs
res$plugin[i] <- (n/4)*mean((plugin-tau)[s==4]^2)
res$xl[i] <- (n/4)*mean((xl-tau)[s==4]^2)
res$drl[i] <- (n/4)*mean((drl-tau)[s==4]^2)
res$oracle.drl[i] <- (n/4)*mean((oracle.drl-tau)[s==4]^2)

}

res2 <- rbind(res2, c(rate, apply(res,2,mean)))
}
\end{verbatim}

\bigskip

\noindent High-dimensional model example: 

\begin{verbatim}
set.seed(1234)
library(glmnet)
expit <- function(x){ exp(x)/(1+exp(x)) }; logit <- function(x){ log(x/(1-x)) }

nsim <- 100; res <- data.frame(matrix(nrow=nsim,ncol=4))
colnames(res) <- c("plugin","xl","drl","oracle.drl")

n <- 4*2000; d <- 500; alpha <- d/10; beta <- alpha

for (i in 1:nsim){

## simulate data
s <- sort(rep(1:4,n/4)); x <- matrix(rnorm(n*d), n, d)
mu0 <- expit(as.numeric(x %*% rep(c(1, 0), c(beta,d-beta)))/sqrt(beta/1))
ps <- expit(as.numeric(x %*% rep(c(1, 0), c(alpha,d-alpha)))/sqrt(alpha/0.25))
tau <- 0; mu1 <- mu0 + tau
a <- rbinom(n,1,ps); y <- rbinom(n,1,a*mu1+(1-a)*mu0)

## estimate nuisance functions
pihat <- predict(cv.glmnet(x[s==1,],a[s==1], family="binomial"),newx=x, 
         type="response", s="lambda.min")
mu0hat <- predict(cv.glmnet(x[a==0 & s==2,],y[a==0 & s==2], family="binomial"), newx=x, 
          type="response", s="lambda.min")
mu1hat <- predict(cv.glmnet(x[a==1 & s==2,],y[a==1 & s==2], family="binomial"),newx=x, 
          type="response", s="lambda.min")

## construct estimators
plugin <- mu1hat-mu0hat
x1 <- predict(cv.glmnet(x[a==1 & s==3,],(y-mu0hat)[a==1 & s==3]),newx=x)
x0 <- predict(cv.glmnet(x[a==0 & s==3,],(mu1hat-y)[a==0 & s==3]),newx=x)
xl <- pihat*x0 + (1-pihat)*x1

pseudo <- ((a-pihat)/(pihat*(1-pihat)))*(y-a*mu1hat-(1-a)*mu0hat) + mu1hat-mu0hat
drl <- predict(cv.glmnet(x[s==3,],pseudo[s==3]),newx=x)
pseudo.or <- ((a-ps)/(ps*(1-ps)))*(y-a*mu1-(1-a)*mu0) + mu1-mu0
oracle.drl <- predict(cv.glmnet(x[s==3,],pseudo.or[s==3]),newx=x)

## save MSEs
res$plugin[i] <- (n/4)*mean((plugin-tau)[s==4]^2)
res$xl[i] <- (n/4)*mean((xl-tau)[s==4]^2)
res$drl[i] <- (n/4)*mean((drl-tau)[s==4]^2)
res$oracle.drl[i] <- (n/4)*mean((oracle.drl-tau)[s==4]^2)

}
\end{verbatim}


\end{document}